\newcommand{\nc}{\newcommand}
\nc{\fg}{\mathfrak{f} } \nc{\vg}{\mathfrak{v} } \nc{\wg}{\mathfrak{w} } \nc{\zg}{\mathfrak{z} } \nc{\ngo}{\mathfrak{n} } \nc{\kg}{\mathfrak{k} }
\nc{\mg}{\mathfrak{m} } \nc{\bg}{\mathfrak{b} } \nc{\ggo}{\mathfrak{g} } \nc{\ggob}{\overline{\mathfrak{g}} } \nc{\sog}{\mathfrak{so} }
\nc{\sug}{\mathfrak{su} } \nc{\spg}{\mathfrak{sp} } \nc{\slg}{\mathfrak{sl} } \nc{\glg}{\mathfrak{gl} } \nc{\cg}{\mathfrak{c} } \nc{\rg}{\mathfrak{r} }
\nc{\hg}{\mathfrak{h} } \nc{\tg}{\mathfrak{t} } \nc{\ug}{\mathfrak{u} } \nc{\dg}{\mathfrak{d} } \nc{\ag}{\mathfrak{a} } \nc{\pg}{\mathfrak{p} }
\nc{\sg}{\mathfrak{s} } \nc{\affg}{\mathfrak{aff} } \nc{\qg}{\mathfrak{q} }
\nc{\pca}{\mathcal{P}} \nc{\nca}{\mathcal{N}} \nc{\lca}{\mathcal{L}} \nc{\oca}{\mathcal{O}} \nc{\mca}{\mathcal{M}} \nc{\tca}{\mathcal{T}}
\nc{\aca}{\mathcal{A}} \nc{\cca}{\mathcal{C}} \nc{\gca}{\mathcal{G}} \nc{\sca}{\mathcal{S}} \nc{\hca}{\mathcal{H}} \nc{\bca}{\mathcal{B}}
\nc{\dca}{\mathcal{D}} \nc{\ica}{\mathcal{I}} \nc{\val}{\operatorname{val}}
\nc{\vp}{\varphi} \nc{\ddt}{\frac{d}{dt}} \nc{\dds}{\frac{d}{ds}} \nc{\dpar}{\frac{\partial}{\partial t}} \nc{\im}{\mathrm{i}}
\nc{\SO}{\mathrm{SO}} \nc{\Spe}{\mathrm{Sp}} \nc{\Sl}{\mathrm{SL}} \nc{\SU}{\mathrm{SU}} \nc{\Or}{\mathrm{O}} \nc{\U}{\mathrm{U}} \nc{\Gl}{\mathrm{GL}}
\nc{\Se}{\mathrm{S}} \nc{\Cl}{\mathrm{Cl}} \nc{\Spein}{\mathrm{Spin}} \nc{\Pin}{\mathrm{Pin}} \nc{\G}{\mathrm{GL}_n(\RR)} \nc{\g}{\mathfrak{gl}_n(\RR)}
\nc{\RR}{{\Bbb R}} \nc{\HH}{{\Bbb H}} \nc{\CC}{{\Bbb C}} \nc{\ZZ}{{\Bbb Z}} \nc{\FF}{{\Bbb F}} \nc{\NN}{{\Bbb N}} \nc{\QQ}{{\Bbb Q}} \nc{\PP}{{\Bbb P}}
\nc{\OO}{{\Bbb O}}
\nc{\vs}{\vspace{.2cm}} \nc{\vsp}{\vspace{1cm}} \nc{\ip}{\langle\cdot,\cdot\rangle} \nc{\ipp}{(\cdot,\cdot)} \nc{\la}{\langle} \nc{\ra}{\rangle}
\nc{\unm}{\frac{1}{2}} \nc{\unc}{\frac{1}{4}} \nc{\und}{\frac{1}{16}} \nc{\no}{\vs\noindent} \nc{\lam}{\Lambda^2(\RR^n)^*\otimes\RR^n} \nc{\tangz}{{\rm
T}^{\rm Zar}} \nc{\nor}{{\sf n}}  \nc{\mum}{/\!\!/} \nc{\kir}{/\!\!/\!\!/} \nc{\Ri}{\tfrac{4\Ric_{\mu}}{||\mu||^2}} \nc{\ds}{\displaystyle}
\nc{\ben}{\begin{enumerate}} \nc{\een}{\end{enumerate}} \nc{\f}{\frac} \nc{\lb}{[\cdot,\cdot]} \nc{\isn}{\tfrac{1}{||v||^2}}
\nc{\gkp}{(\ggo=\kg\oplus\pg,\ip)} \nc{\ukh}{(\ug=\kg\oplus\hg,\ip)} \nc{\tgkp}{(\tilde{\ggo}=\kg\oplus\pg,\ip)} \nc{\wt}{\widetilde} \nc{\mm}{M}
\nc{\iop}{\mathtt{i}} \nc{\jop}{\mathtt{j}}
\nc{\Hess}{\operatorname{Hess}} \nc{\ad}{\operatorname{ad}} \nc{\Ad}{\operatorname{Ad}} \nc{\rank}{\operatorname{rank}} \nc{\Irr}{\operatorname{Irr}}
\nc{\End}{\operatorname{End}} \nc{\Aut}{\operatorname{Aut}} \nc{\Inn}{\operatorname{Inn}} \nc{\Der}{\operatorname{Der}} \nc{\Ker}{\operatorname{Ker}}
\nc{\Iso}{\operatorname{Iso}} \nc{\Diff}{\operatorname{Diff}} \nc{\Lie}{\operatorname{Lie}} \nc{\tr}{\operatorname{tr}} \nc{\dif}{\operatorname{d}}
\nc{\sen}{\operatorname{sen}} \nc{\modu}{\operatorname{mod}} \nc{\CRic}{\operatorname{PP}} \nc{\Cric}{\operatorname{P}} \nc{\Ricci}{\operatorname{Ric}}
\nc{\sym}{\operatorname{sym}} \nc{\herm}{\operatorname{herm}} \nc{\symac}{\operatorname{sym^{ac}}} \nc{\symc}{\operatorname{sym^{c}}}
\nc{\scalar}{\operatorname{scal}} \nc{\grad}{\operatorname{grad}} \nc{\ricci}{\operatorname{Rc}} \nc{\Nor}{\operatorname{Norm}}
\nc{\ricc}{\operatorname{Rc^{c}}} \nc{\Ricc}{\operatorname{Ric^{c}}} \nc{\ricac}{\operatorname{Rc^{ac}}} \nc{\Ricac}{\operatorname{Ric^{ac}}}
\nc{\Riem}{\operatorname{R}} \nc{\riccig}{\operatorname{ric^{\gamma}}} \nc{\Rin}{\operatorname{M}} \nc{\Le}{\operatorname{L}} \nc{\tang}{\operatorname{T}}
\nc{\level}{\operatorname{level}} \nc{\rad}{\operatorname{r}} \nc{\abel}{\operatorname{ab}} \nc{\CH}{\operatorname{CH}} \nc{\mcc}{\operatorname{mcc}}
\nc{\Adj}{\operatorname{Adj}} \nc{\Order}{\operatorname{O}}  \nc{\inj}{\operatorname{inj}} \nc{\proy}{\operatorname{proy}} \nc{\vol}{\operatorname{vol}}
\nc{\Diag}{\operatorname{Dg}} \nc{\Spec}{\operatorname{Spec}} \nc{\Ima}{\operatorname{Im}} \nc{\Rea}{\operatorname{Re}} \nc{\spann}{\operatorname{sp}}
\nc{\id}{\operatorname{id}}
\theoremstyle{plain}
\newtheorem{theorem}{Theorem}[section]
\newtheorem{proposition}[theorem]{Proposition}
\newtheorem{corollary}[theorem]{Corollary}
\newtheorem{lemma}[theorem]{Lemma}
\theoremstyle{definition}
\newtheorem{definition}[theorem]{Definition}
\theoremstyle{remark}
\newtheorem{remark}[theorem]{Remark}
\newtheorem{example}[theorem]{Example}
\title{New examples of shrinking Laplacian solitons}
\author{Marina Nicolini}
\address{Universidad Nacional de C\'ordoba, FaMAF and CIEM, 5000 C\'ordoba, Argentina}
\email{mnicolini@famaf.unc.edu.ar}
\thanks{This research was partially supported by Universidad Nacional de C\'ordoba.}
\begin{document}

\maketitle

\begin{abstract} We give a one-parameter family of examples of shrinking Laplacian solitons, which are the second known solutions to the closed $G_2$-Laplacian flow with a
finite-time singularity. The torsion forms and the Laplacian and Ricci operators of a large family of $G_2$-structures on different Lie groups are also
studied. We apply these formulas to prove that, under a suitable extra condition, there is no closed eigenform for the Laplacian on such family.
\end{abstract}

\section{Introduction}

On a differentiable $7$-manifold $M$, a $G_2$-structure is a differentiable $3$-form $\vp$ on $M$ such that at each $p\in M$ one can write:
\begin{equation}\label{phi-intro}
\vp_p=e^{127}+e^{347}+e^{567}+e^{135}-e^{146}-e^{236}-e^{245},
\end{equation}
for some basis $\{e_1,\dots,e_7\}$ of $T_pM$. It is known that such a $\vp$ induces a Riemannian metric and an orientation on $M$, and therefore the
corresponding Hodge star and  Hodge Laplacian operators on forms. In \cite{Bry}, Bryant introduced the Laplacian flow for closed $G_2$-structures given by
$$
\left\{\begin{array}{l}\dpar\vp(t) = \Delta\vp(t),\\
 \vp(0)=\vp.
\end{array}\right.
$$
We refer to the recent surveys \cite{Lty,Wei} for accounts of several important results on this flow.  The long-time behavior of solutions is the main
problem, a given solution is expected to converge to a torsion-free $G_2$-structure under appropriate conditions.  However, long-time existence of
solutions is still an open problem in the case when $M$ is compact (see \cite[Section 4.4]{Lty}). On the other hand, in the non-compact case, the only
solutions with a finite-time singularity  known so far are the shrinking Laplacian solitons found in \cite{LS-ERP} on solvable Lie groups.

It is known that a solution $\vp(t)$ flows in a self-similar way, i.e.,
$$
\vp(t)=c(t)f(t)^*\vp, \quad\mbox{for some } c(t)\in\RR^* \mbox{ and } f(t)\in\Diff(M),
$$
if and only if
$$
\Delta\vp=c\vp+\lca_{X}\vp, \qquad \mbox{for some}\quad c\in\RR, \quad X\in\mathfrak{X}(M)\; \mbox{(complete)},
$$
where $\lca_X$ denotes the Lie derivative with respect to the field $X$, in which case $c(t)=\left(\frac{2}{3}ct+1\right)^{3/2}$. In this case, we call
$\vp$ a {\it Laplacian soliton} and we say that it is {\it expanding}, {\it steady} or {\it shrinking}, if $c>0$, $c=0$ or $c<0$, respectively. Note that
in the shrinking case the solution develops a finite-time singularity at $T=-\tfrac{3}{2\,c}>0$.

As mentioned above, previous to this work, there was in the literature only a one-parameter family of shrinking Laplacian solitons, given by Lauret in
\cite[Example 4.10]{LS-ERP} as left-invariant $G_2$-structures on certain solvable Lie groups. In Section \ref{sec-sol}, we provide a new pairwise
non-homothetic family of shrinking Laplacian solitons on solvable Lie groups, which is not equivalent to the family given by Lauret. In this way, we
provide new examples of Laplacian flow solutions that have a finite-time singularity.

Lie groups are a practical tool for the study of $G_2$-structures, since it is sufficient to study the problems at the Lie algebra level. In Section
\ref{sec-ABC}, we fix a $G_2$-structure $\vp\in\Lambda^3\ggo^*$ as in \eqref{phi-intro}, and vary the Lie bracket on $\ggo$ which depends on matrices
$A_1\in\glg_2(\RR)$ and $A,B,C\in\glg_4(\RR)$ in the following way,
$$
A_1=\ad{e_7}|_{\spann\{e_1,e_2\}}=\left[\begin{array}{cc}x&z\\
y&w\end{array}\right],\quad A=\ad{e_7}|_{\ggo_1},\quad B=\ad{e_1}|_{\ggo_1},\quad C=\ad{e_2}|_{\ggo_1},
$$
where $\spann\{e_1,e_2\}$ is abelian, $\spann\{e_7,e_1,e_2\}$ is a subalgebra, $\ggo_1:=\spann\{e_3,e_4,e_5,e_6\}$ is an abelian ideal and
$\spann\{e_1,\dots,e_6\}$ is unimodular, that is, $\tr B=\tr C=0$. We call $G_{A_1,A,B,C}$ the corresponding simply connected Lie group. We compute the
formulas for some operators, such as the Laplacian or the Ricci operator, together with the torsion forms, in terms of the coefficients of the matrices.
The formulas are given in general, beyond the closed and coclosed case, and have already been used in \cite{KL} to study the Laplacian coflow and its
solitons in the case $A_1=0$, $\tr{A}=0$ and $A$,$B$,$C$ symmetric.

A $G_2$-structure $\vp$ that satisfies $\Delta\vp=\lambda\vp$, for some real number $\lambda$, is said to be an \emph{eigenform}. In the compact case,
Lotay and Wei showed in \cite[Proposition 9.2]{LW} that every closed eigenform must be torsion-free. However, it is still an open and intriguing question
if such structures exist in the non-compact case. In Section \ref{sec-EF}, we apply the above mentioned formulas to study this problem on the family of
$G_2$-structures $\{(G_{A_1,A,B,C},\vp)\}$. We prove that if in addition one assumes that the torsion form is given by
$\tau_2=a\,e^{12}+b\,e^{34}+c\,e^{56}$, for some $a+b+c=0$, then $\vp$ must be torsion-free ($\tau_2=0$).

A closed $G_2$-structure that satisfies the following condition:
\begin{equation}\label{ERP-intro}
\Delta\vp=d\tau=\tfrac{1}{6}|\tau|^2+\tfrac{1}{6}\ast(\tau\wedge\tau),
\end{equation}
is called an Extremally Ricci pinched (ERP) $G_2$-structure. In \cite{ERP2}, a complete classification of left-invariant ERP $G_2$-structures on Lie groups
is obtained. Moreover, it is proved that any left-invariant ERP $G_2$-structure on a Lie group is a steady Laplacian soliton and its underlying metric is
an expanding Ricci soliton (i.e.\, a self-similar solution to the Ricci flow $\dpar g(t)=-2\Ricci(g(t))$). The converse is not true, in \cite{FinRff3} the
authors gave an example of a steady Laplacian soliton that does no satisfy the ERP condition. We show that the steady Laplacian soliton found in Section
\ref{sec-sol} is not an ERP-structure either. These steady Laplacian solitons are not equivalent.

\section{$G_2$-geometry on $G_{A_1,A,B,C}$}\label{sec-ABC}

In this section we explore a large family of Lie groups with parameters $A_1\in\glg_2(\RR)$ and $A,B,C\in\glg_4(\RR)$. We fix a left-invariant
$G_2$-structure on the Lie group, determined by a positive $3$-form $\vp$ on the Lie algebra, and we compute the formulas for the Laplacian and Ricci
operators, as well as the torsion form formulas in terms of the coefficients of $A_1$, $A$, $B$ and $C$.

\subsection{Linear Algebra}

Given a $7$-dimensional Lie algebra $\ggo$, a $3$-form $\vp$ in $\ggo$ is said to be \emph{positive} if there exists a basis
 $\{e_1,\dots,e_7\}$ of $\ggo$ such that
\begin{equation}\label{phi}
\vp=e^{127}+e^{347}+e^{567}+e^{135}-e^{146}-e^{236}-e^{245}.
\end{equation}
$\vp$ determines an inner product $\ip$ and a volume form on $\ggo$, such that $\{e_1,\dots,e_7\}$ turns out to be oriented and orthonormal. In particular,
we can consider the Hodge star operator on $k$-forms:
$$
\ast:\Lambda^k\ggo^*\rightarrow\Lambda^{7-k}\ggo^*, \qquad \beta\wedge\ast\alpha=\la\beta,\alpha\ra e^{1\dots7},
$$
for any $\alpha\in\Lambda^{k}\ggo^*$, $\beta\in\Lambda^{7-k}\ggo^*$.

We denote by $\ggo_1$ the $4$-dimensional subspace generated by $\{e_3,e_4,e_5,e_6\}$ and define a basis $\Upsilon$ for $\Lambda^2\ggo_1^*$ as follows,
\begin{align}
\overline{\omega}_7:=&e^{34}-e^{56},\quad\overline{\omega}_1=e^{35}+e^{46},\quad\overline{\omega}_2:=-e^{36}+e^{45},\label{base-2for}\\
\omega_7:=&e^{34}+e^{56},\quad\omega_1:=e^{35}-e^{46},\quad\omega_2:=-e^{36}-e^{45}.\nonumber
\end{align}
Note that
$$
\vp=e^{127}+\omega_7\wedge e^7+\omega_1\wedge e^1+\omega_2\wedge e^2.
$$
One can easily check that $\Upsilon=\{\overline{\omega}_7,\overline{\omega}_1,\overline{\omega}_2,\omega_7,\omega_1,\omega_2\}$ is an orthogonal basis of
$\Lambda^2\ggo_1^*$ such that every element has norm equal to $2$. Moreover, the Hodge star operator restricted to $\ggo_1$, denoted by
$\ast_{\ggo_1}:\Lambda^k\ggo_1^*\rightarrow \Lambda^{4-k}\ggo_1^*$, acts on each element of $\Upsilon$ in the following way,
$$
\ast_{\ggo_1}\overline{\omega}_i=-\overline{\omega}_i, \qquad \ast_{\ggo_1}\omega_i=\omega_i, \qquad i=1,2,7.
$$

Let $\theta$ denote the derivative of the action of $\Gl(\ggo_1)$ on $\Lambda^k\ggo_1^*$, we mean the representation
$$\theta:\glg(\ggo_1)\longrightarrow\End(\Lambda^k\ggo_1^*),$$
such that
\begin{equation}\label{def-tita}
\theta(M)\alpha(\cdot,\dots,\cdot)=-\alpha(M\cdot,\dots,\cdot)-\dots-\alpha(\cdot,\dots,M\cdot),
\end{equation}
for every $M\in\glg(\ggo_1)$ and $\alpha\in\Lambda^k\ggo_1^*$. In particular, $\theta(M)$ is a derivation of $\Lambda^k\ggo_1^*$ for any $M$. It follows
easily that,
\begin{equation}\label{sls-astg1}
\ast_{\ggo_1}\theta(M)\alpha=-\theta(M^t)\ast_{\ggo_1}\alpha-\tr{M}\ast_{\ggo_1}\alpha, \quad \forall M\in\glg_4(\RR).
\end{equation}

Hence, for every matrix $M\in\glg(\ggo_1)\equiv\glg_4(\RR)$, one obtains that $\theta(M)$ can be written as follows in terms of the basis $\Upsilon$,
\begin{equation}\label{titaM-conTraza}
\theta(M)=\left[\begin{array}{c|c} M_1-\tfrac{\tr{M}}{2}\id & M_2  \\  \hline &\\
                                   M_2^t & M_4-\tfrac{\tr{M}}{2}\id \\
\end{array} \right],\qquad M_1^t=-M_1,\; M_4^t=-M_4,
\end{equation}
for some $M_1,M_2,M_4\in\glg_3(\RR)$. Note that when $\tr{M}=0$, $\theta$ defines the classical isomorphism between $\slg_4(\RR)$ and $\sog(3,3)$.
\begin{remark}
$\tr{\theta(M)}=-3\tr{M}$, for every $M\in\glg_4(\RR)$.
\end{remark}

For the $7$-dimensional Lie algebra $\ggo$ and the positive $3$-form $\vp$ as in \eqref{phi}, we consider the \emph{Hodge Laplacian operator} on $k$-forms
defined by,
$$
\Delta_k:\Lambda^k\ggo^*\rightarrow\Lambda^k\ggo^*,\qquad \Delta\alpha=(-1)^k\left(d\ast d\ast-\ast d\ast d\right)\alpha, \qquad
\forall\alpha\in\Lambda^k\ggo^*.
$$

On the other hand, according to the following irreducible $G_2$-module decompositions (see \cite[(2.14)]{Bry} for a description of the summands),
\begin{align*}
\Lambda^2\ggo^*=&\Lambda^2_7\ggo^*+\Lambda^2_{14}\ggo^*,\\
\Lambda^3\ggo^*=&\Lambda^3_1\ggo^*+\Lambda^2_{7}\ggo^*+\Lambda^2_{27}\ggo^*,
\end{align*}
Bryant proved that we can decompose $d\vp\in\Lambda^4\ggo^*$ and $d\ast\vp\in\Lambda^5\ggo^*$ in the following way:
$$
d\vp=\tau_0\ast\vp+3\tau_1\wedge\vp+\ast\tau_3,\qquad d\ast\vp=4\tau_1\wedge\ast\vp+\tau_2\wedge\vp,
$$
where $\tau_0\in\RR$, $\tau_1\in\Lambda^1\ggo^*$, $\tau_2\in\Lambda^2_{14}\ggo^*$ and $\tau_3\in\Lambda^3_{27}\ggo^*$ are the \emph{torsion forms} of
$\vp$. In \cite[(3)]{MOV}, the authors gave the following useful formulas for the torsion forms:
\begin{align}
\tau_0=\tfrac{1}{7}\ast(d\vp\wedge\vp), & \quad \tau_2=-\ast d\ast\vp+4 \ast (\tau_1\wedge\ast\vp),\label{tor-for}\\
 \tau_1=-\tfrac{1}{12}\ast(\ast
d\vp\wedge\vp), & \quad \tau_3=\ast d\vp-\tau_0\vp-3\ast(\tau_1\wedge\vp). \nonumber
\end{align}

\subsection{The family $G_{A_1,A,B,C}$}

Let $\ggo$ be a $7$-dimensional Lie algebra with basis $\{e_1,\dots,e_7\}$ and Lie bracket determined by,
\begin{equation}\label{muABC}
A_1=\ad{e_7}|_{\spann\{e_1,e_2\}}=\left[\begin{array}{cc}x&z\\
y&w\end{array}\right],\quad A=\ad{e_7}|_{\ggo_1},\quad B=\ad{e_1}|_{\ggo_1},\quad C=\ad{e_2}|_{\ggo_1},
\end{equation}
such that $\spann\{e_1,e_2\}$ is abelian, $\ggo_0:=\spann\{e_7,e_1,e_2\}$ is a subalgebra, $\ggo_1:=\spann\{e_3,e_4,e_5,e_6\}$ is an abelian ideal and
$\hg:=\spann\{e_1,\dots,e_6\}$ is unimodular, that is, $\tr B=\tr C=0$. We further require that,
$$
[A,B]=x\,B+y\,C,\qquad [A,C]=z\,B+w\,C,\qquad [B,C]=0,
$$
in order to satisfy the Jacobi condition. We denote by $G_{A_1,A,B,C}$, the simply connected Lie group with Lie algebra $\ggo$. It follows that
$G_{A_1,A,B,C}$ is solvable and the nilradical of $\ggo$ has dimension greater than or equal to $4$.

A \emph{$G_2$-structure} on a differentiable manifold is a differentiable $3$-form such that it is positive at every point of $M$ (see \eqref{phi}). On a
Lie group, a left-invariant $G_2$-structure is determined by its value at the identity. In particular, we consider on each $G_{A_1,A,B,C}$ the
left-invariant $G_2$-structure defined by the positive $3$-form $\vp\in\Lambda^3\ggo^*$ as in \eqref{phi}.

One can attempt to study certain properties or flows (such as the Laplacian flow or the Laplacian co-flow among others) on this large family of
left-invariant $G_2$-structures $\{(G_{A_1,A,B,C},\vp)\}$. For this reason it is convenient to have some formulas, such as $\ast d\ast d\vp$ and $d\ast
d\ast \vp$, needed to calculate the Hodge Laplacian and the torsion forms of $\vp$ in terms of the matrices $A_1$, $A$, $B$ y $C$.

Recall from the definition of $\theta(M):\Lambda^1\ggo_1^*\rightarrow\Lambda^1\ggo_1^*$ for $M\in\glg_4(\RR)$ (see \eqref{def-tita}) that
\begin{equation}\label{sls-tita}
\theta(M)e^{i+2}=-\sum_{j=1}^4 M_{ij} e^{j+2}, \qquad i\in\{1,\dots,4\}.
\end{equation}
The following proposition summarizes the formulas that one needs in order to compute the exterior derivative of any $k$-form in the Lie algebra $\ggo$,
depending on $A_1$, $A$, $B$ and $C$.
\begin{proposition}\label{sls-prop}
Let $\alpha\in\Lambda^i\ggo_1^*$ and $\beta\in\Lambda^j\ggo_0^*$, it follows that
\begin{itemize}
\item[{\rm (i)}] $d\alpha=(-1)^i \left(\theta(A)\alpha\wedge e^7+\theta(B)\alpha\wedge e^1+\theta(C)\alpha\wedge e^2\right)$.
\item[{\rm (ii)}] $d\,e^1=-\theta(A_1)e^1\wedge e^7=(x\,e^1+z\,e^2)\wedge e^7$.
\item[{\rm (iii)}] $d\,e^2=-\theta(A_1)e^2\wedge e^7=(y\,e^1+w\,e^2)\wedge e^7$.
\item[{\rm (iv)}] $d\, e^7=0$.
\item[{\rm (v)}] $\ast(\alpha\wedge\beta)=(-1)^{i\,j}\ast_{\ggo_1}\alpha\wedge\ast_{\ggo_0}\beta$.
\end{itemize}
\end{proposition}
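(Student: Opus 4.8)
The plan is to verify each identity by a direct computation using the definitions of the Lie bracket and the Cartan formula $d\gamma(X_0,\dots,X_k)=\sum_i(-1)^i X_i\gamma(\dots,\widehat{X_i},\dots)+\sum_{i<j}(-1)^{i+j}\gamma([X_i,X_j],\dots,\widehat{X_i},\dots,\widehat{X_j},\dots)$; since all $X_i$ are left-invariant, the first sum vanishes and $d\gamma(X_0,\dots,X_k)=\sum_{i<j}(-1)^{i+j}\gamma([X_i,X_j],\dots)$. Equivalently, at the level of $1$-forms, $d e^k=-\sum_{i<j}c^k_{ij}\,e^i\wedge e^j$ where $c^k_{ij}$ are the structure constants, and $d$ is then extended to $\Lambda^\bullet\ggo^*$ as the unique antiderivation. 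Items (ii), (iii), (iv) are then immediate: the only nonzero brackets with $e_7$ involving $e_1,e_2$ are $[e_7,e_1]=x e_1+y e_2$, $[e_7,e_2]=z e_1+w e_2$ (read off from $A_1=\ad{e_7}|_{\spann\{e_1,e_2\}}$), so $d e^1=-(x\,e^{71}+y\,e^{72})=(x e^1+y e^2)\wedge e^7$; one must only take care with the transpose/sign convention relating $\ad{e_7}$ to the structure constants, which is exactly the content of writing $d e^1=-\theta(A_1)e^1\wedge e^7$ via \eqref{sls-tita}. (Here one should double-check that the stated formula $(x e^1+z e^2)\wedge e^7$ versus $(x e^1+y e^2)\wedge e^7$ matches the author's indexing of $A_1$; whichever is correct, it is forced by \eqref{sls-tita}.) And $d e^7=0$ because $e_7$ never appears as the value of a bracket — $\ggo_0$ is a subalgebra with $e_7$ acting, and $\ggo_1$ is an ideal, so $[\ggo,\ggo]\subseteq\spann\{e_1,\dots,e_6\}$, hence $e^7$ annihilates all brackets.

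For item (i), one computes $d\alpha$ for $\alpha\in\Lambda^i\ggo_1^*$. The key point is that $\ggo_1$ is an abelian ideal, so for $X\in\ggo_1$ one has $[X,Y]\in\ggo_1$ for all $Y$, and moreover $[X,Y]=0$ if $Y\in\ggo_1$; thus the only brackets contributing to $d\alpha(X_0,\dots,X_i)$ with $X_0,\dots,X_i$ drawn from a basis adapted to $\ggo=\ggo_1\oplus\spann\{e_7,e_1,e_2\}$ are those of the form $[Z,X_j]$ with exactly one of the arguments equal to $e_7,e_1$ or $e_2$ and the rest in $\ggo_1$. Tracking signs, this yields precisely $d\alpha=\pm(\ad{e_7}^*\alpha\wedge e^7+\ad{e_1}^*\alpha\wedge e^1+\ad{e_2}^*\alpha\wedge e^2)$, and by the definition \eqref{def-tita} of $\theta$ the coadjoint actions $\ad{e_7}^*,\ad{e_1}^*,\ad{e_2}^*$ on $\Lambda^i\ggo_1^*$ are exactly $\theta(A),\theta(B),\theta(C)$ (again up to the transpose convention fixed by \eqref{sls-tita}). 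The overall sign $(-1)^i$ arises from moving the exterior-derivative "slot" past the $i$-form $\alpha$; the cleanest way to pin it down is to check it for $i=1$ directly from structure constants and then invoke that both sides are antiderivations in $\alpha$ — or simply expand an arbitrary decomposable $\alpha=e^{j_1}\wedge\dots\wedge e^{j_i}$ and use the Leibniz rule together with $d e^{j}=\theta(A)e^j\wedge e^7+\theta(B)e^j\wedge e^1+\theta(C)e^j\wedge e^2$ for $j\in\{3,4,5,6\}$, commuting each $e^7,e^1,e^2$ to the right.

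Item (v) is the Hodge-star compatibility with the orthogonal splitting $\ggo^*=\ggo_1^*\oplus\ggo_0^*$. Since $\{e_1,\dots,e_7\}$ is oriented orthonormal with $\ggo_1=\spann\{e_3,\dots,e_6\}$ orthogonal to $\ggo_0=\spann\{e_7,e_1,e_2\}$, and we fix orientations on the factors so that $e^{3456}\wedge e^{127}=e^{1\dots7}$ (matching the ambient orientation up to the permutation sign, which should be absorbed into the definitions of $\ast_{\ggo_1},\ast_{\ggo_0}$), the claim $\ast(\alpha\wedge\beta)=(-1)^{ij}\ast_{\ggo_1}\alpha\wedge\ast_{\ggo_0}\beta$ for decomposable $\alpha\in\Lambda^i\ggo_1^*$, $\beta\in\Lambda^j\ggo_0^*$ is the standard formula for the star operator on a tensor-product/orthogonal-sum of inner product spaces; the sign $(-1)^{ij}$ comes from the graded commutativity needed to rearrange the wedge of complementary pieces into the standard order, and one then extends bilinearly. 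The whole proposition is thus a bookkeeping exercise: the \textbf{only real subtlety}, and the place where care is needed, is fixing all the sign and transpose conventions consistently — the $(-1)^i$ in (i), the $(-1)^{ij}$ in (v), and the relation between $\ad{e_7}|_{\ggo_1}=A$ acting on $\ggo_1$ and $\theta(A)$ acting on $\Lambda^1\ggo_1^*$ via \eqref{sls-tita} — so that the formulas are internally coherent and compatible with \eqref{muABC}. No deep idea is required beyond this; everything follows from the Cartan formula, the Leibniz rule for $d$, and the definition of $\ast$ on orthogonal summands.
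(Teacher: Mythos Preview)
Your approach is essentially the same as the paper's: verify (i) on $1$-forms via the structure constants and extend by the derivation property, read (ii)--(iv) off the brackets, and for (v) check the defining wedge identity on the orthogonal splitting $\ggo_1^*\oplus\ggo_0^*$. One small slip to fix: from $[e_7,e_1]=x e_1+y e_2$ and $[e_7,e_2]=z e_1+w e_2$ you get $de^1(e_7,e_2)=-e^1([e_7,e_2])=-z$, not $-y$, so the paper's $(x\,e^1+z\,e^2)\wedge e^7$ is the correct expression and your hedge is unnecessary.
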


\begin{proof}
It is sufficient to prove (i) for $1$-forms since the exterior derivative is a derivation. From the definition of the Lie bracket, for $i=1,2,3,4$ we have
that
$$
[e_7,e_{i+2}]=A\,e_i=\sum_{j=1}^4 A_{ji} e^{j+2} \Rightarrow \la d\,e^j,e^{7(i+2)}\ra=-A_{ji}.
$$
Analogously for $B$ and $C$. Hence,
$$
d\,e^{j+2}=\sum_{i=1}^4 \left(A_{ji} e^{(i+2)7}+B_{ji} e^{(i+2)1}+C_{ji} e^{(i+2)2}\right) = -\theta(A)e^j\wedge e^7-\theta(B)e^j\wedge
e^1-\theta(C)e^j\wedge e^2,
$$
for every $j=1,2,3,4$, which proves that (i) is true for $1$-forms. Items (ii), (iii) can be proved in much the same way. The item (iv) follows directly
from the fact that $\hg$ is an ideal. To prove (v), note that $|\alpha\wedge\beta|^2=|\alpha|^2|\beta|^2$, then
$$
(\alpha\wedge\beta)\wedge(\ast_{\ggo_1}\alpha\wedge\ast_{\ggo_0}\beta)=(-1)^{j(4-i)}\alpha\wedge\ast_{\ggo_1}\beta\wedge\alpha\wedge\ast_{\ggo_0}\beta
                =(-1)^{i\,j}|\alpha|^2e^{3456}\wedge|\beta|^2e^{127},
$$
which completes the proof.
\end{proof}

\subsection{Formulas for $(G_{A_1,A,B,C},\vp)$}

We aim in this section to express in terms of $A_1$, $A$, $B$ and $C$ some formulas needed to calculate the torsion forms and the Laplacian of $\vp$.
Indeed, the following theorem displays such formulas for $d\vp$, $\ast d\vp$, $ d\ast d\vp$ and $\ast d\ast d\vp$. Recall from \eqref{base-2for} the
definition of $\omega_i$ for $i=1,2,7$.

\begin{theorem}\label{sls-teo1}
Let $\ggo$ be a $7$-dimensional Lie algebra with Lie bracket determined by $A_1$, $A$, $B$ and $C$ as in \eqref{muABC}. Consider the $G_2$-structure $\vp$
defined in $\eqref{phi}$, the following formulas hold for the exterior derivative and the Hodge star operator on $\ggo$,
\begin{itemize}
\item[{\rm (i)}] $\vp=e^{127}+\omega_7\wedge e^7+\omega_1\wedge e^1+\omega_2\wedge e^2$,\\
\item[{\rm (ii)}] $d\vp=(\theta(B)\omega_2-\theta(C)\omega_1)\wedge e^{12}+(\theta(B)\omega_7-\theta(A)\omega_1+x\omega_1+y\omega_2)\wedge e^{17}$
\item[]      $\qquad +(\theta(C)\omega_7-\theta(A)\omega_2+z\omega_1+w\omega_2)\wedge e^{27}$, \\
\item[{\rm (iii)}] $\ast d\vp=(-\theta(B^t)\omega_2+\theta(C^t)\omega_1)\wedge e^7 +(\theta(B^t)\omega_7-\theta(A^t)\omega_1-(\tr{A})\omega_1-x\omega_1-y\omega_2)\wedge e^2 $
\item[]      $\qquad +(-\theta(C^t)\omega_7+\theta(A^t)\omega_2+(\tr{A})\omega_2+z\omega_1+w\omega_2)\wedge e^1 $,\\
\item[{\rm (iv)}]  $d\ast d\vp=\theta(B)(-\theta(B^t)\omega_2+\theta(C^t)\omega_1)\wedge e^{17}+\theta(C)(-\theta(B^t)\omega_2+\theta(C^t)\omega_1)\wedge e^{27}$
\item[]      $\quad\qquad+\theta(A)(-\theta(B^t)\omega_7+\theta(A^t)\omega_1+(\tr{A})\omega_1+x\omega_1+y\omega_2)\wedge e^{27}$
\item[]      $\quad\qquad+\theta(B)(\theta(B^t)\omega_7-\theta(A^t)\omega_1-(\tr{A})\omega_1-x\omega_1-y\omega_2)\wedge e^{12}$
\item[]      $\quad\qquad+\theta(A)(\theta(C^t)\omega_7-\theta(A^t)\omega_2-(\tr{A})\omega_2-z\omega_1-w\omega_2)\wedge e^{17}$
\item[]      $\quad\qquad+\theta(C)(\theta(C^t)\omega_7-\theta(A^t)\omega_2-(\tr{A})\omega_2-z\omega_1-w\omega_2)\wedge e^{12}$,\\
\item[{\rm (v)}]   $\ast d\ast d\vp=\theta(B^t)(\theta(B)\omega_2-\theta(C)\omega_1)\wedge e^{2}+\theta(C^t)(\theta(C)\omega_1-\theta(B)\omega_2)\wedge e^{1}$
\item[]      $\qquad\qquad+\theta(A^t)(-\theta(B)\omega_7+\theta(A)\omega_1-x\omega_1-y\omega_2)\wedge e^{1} $
\item[]      $\qquad\qquad+(\tr{A})(-\theta(B)\omega_7+\theta(A)\omega_1-x\omega_1-y\omega_2)\wedge e^{1} $
\item[]      $\qquad\qquad+\theta(A^t)(-\theta(C)\omega_7+\theta(A)\omega_2-z\omega_1-w\omega_2)\wedge e^{2}$
\item[]      $\qquad\qquad+(\tr{A})(-\theta(C)\omega_7+\theta(A)\omega_2-z\omega_1-w\omega_2)\wedge e^{2}$
\item[]      $\qquad\qquad+\theta(B^t)(\theta(B)\omega_7-\theta(A)\omega_1+x\omega_1+y\omega_2)\wedge e^{7}$
\item[]      $\qquad\qquad+\theta(C^t)(\theta(C)\omega_7-\theta(A)\omega_2+z\omega_1+w\omega_2)\wedge e^{7}$.
\end{itemize}
\end{theorem}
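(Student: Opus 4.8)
The plan is to obtain all five formulas by a single chain of elementary computations: starting from (i), each of (ii)--(v) is produced from its predecessor by applying either the exterior derivative $d$ or the Hodge star $\ast$, and every such step is handled by Proposition \ref{sls-prop} together with the linear-algebra identities \eqref{sls-astg1} and \eqref{titaM-conTraza}. Item (i) requires no work beyond unwinding definitions: from \eqref{base-2for} one has $\omega_7\wedge e^7+\omega_1\wedge e^1+\omega_2\wedge e^2=e^{347}+e^{567}+e^{135}-e^{146}-e^{236}-e^{245}$, so adding $e^{127}$ recovers $\vp$ as in \eqref{phi}.

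For (ii) I would differentiate the right-hand side of (i) using that $d$ is a derivation. One first checks $d(e^{127})=0$: by Proposition \ref{sls-prop}(ii)--(iv), $de^1$ and $de^2$ are each a $2$-form containing the factor $e^7$ and $de^7=0$, so every term of $d(e^1\wedge e^2\wedge e^7)$ carries $e^7$ twice. For the remaining summands $\omega_i\wedge e^i$ ($i\in\{1,2,7\}$) one writes $d(\omega_i\wedge e^i)=d\omega_i\wedge e^i+\omega_i\wedge de^i$, computing $d\omega_i$ from Proposition \ref{sls-prop}(i) with $k=2$ and $de^i$ from Proposition \ref{sls-prop}(ii)--(iv); collecting the resulting $4$-forms according to which of $e^{12},e^{17},e^{27}$ they contain yields the stated expression. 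Item (iv) is obtained from (iii) in exactly the same fashion, since each summand of $\ast d\vp$ is again a $2$-form in $\ggo_1$ wedged with one of $e^1,e^2,e^7$, to which the derivation rule and Proposition \ref{sls-prop}(i)--(iv) apply verbatim.

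For (iii) and (v) the key point is that each summand of $d\vp$, and likewise of $d\ast d\vp$, is a product of a $2$-form in $\ggo_1$ with a $2$-form in $\ggo_0$, so Proposition \ref{sls-prop}(v) (with $i=j=2$, hence with trivial sign) lets one compute $\ast$ factorwise as $\ast_{\ggo_1}(\,\cdot\,)\wedge\ast_{\ggo_0}(\,\cdot\,)$. On the $\ggo_0$-factors I would use the orientation $e^{127}$ of $\ggo_0$ to read off $\ast_{\ggo_0}e^{12}=e^7$, $\ast_{\ggo_0}e^{17}=-e^2$, $\ast_{\ggo_0}e^{27}=e^1$. On the $\ggo_1$-factors I would use $\ast_{\ggo_1}\omega_i=\omega_i$ together with \eqref{sls-astg1}, namely $\ast_{\ggo_1}\theta(M)\alpha=-\theta(M^t)\ast_{\ggo_1}\alpha-(\tr M)\ast_{\ggo_1}\alpha$; since $\tr B=\tr C=0$ the trace contributions from $\theta(B),\theta(C)$ drop out, while those from $\theta(A)$ survive and produce exactly the $(\tr A)$-terms displayed in (iii) and (v). In (v) this identity is applied twice, the forms being starred having the shape $\theta(M)(\theta(N^t)\omega_i+\cdots)$ with $M,N\in\{A,B,C\}$, which is what turns $\theta(M)\theta(N^t)$ into $\theta(M^t)\theta(N)$.

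I expect the only real difficulty to be organizational rather than conceptual: each application of $d$ or $\ast$ multiplies the number of terms, and one must carefully track the signs generated by commuting $1$-forms past $2$-forms and by $\ast_{\ggo_0}$, and recognize the coincidences that collapse the raw expansion into the compact form stated. No input beyond Proposition \ref{sls-prop}, the identities \eqref{sls-astg1}--\eqref{titaM-conTraza}, and the orthogonality and self-duality relations for the basis $\Upsilon$ is needed; the Jacobi relations on $A,B,C$ are merely what makes $\ggo$ a Lie algebra and play no active role in these particular identities.
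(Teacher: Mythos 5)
Your proposal is correct and follows essentially the same route as the paper: (ii) is obtained by differentiating (i) with the derivation rule and Proposition \ref{sls-prop}(i)--(iv), (iii) by applying Proposition \ref{sls-prop}(v) factorwise together with \eqref{sls-astg1} and $\ast_{\ggo_1}\omega_i=\omega_i$ (the $(\tr A)$-terms arising exactly as you say, with the $B,C$ traces vanishing by unimodularity), and (iv), (v) by iterating the same two steps. The only caveat is the usual one for this kind of statement: the argument is a bookkeeping computation, so the proof is only as good as the sign-tracking you defer to the "organizational" part.
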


\begin{proof}
The item (i) follows directly from the definition of $\vp$ and $\omega_i$'s. By Proposition \ref{sls-prop} we obtain that,
\begin{align*}
d\vp=&d\,e^{12}\wedge e^7+d\omega_7\wedge e^7+d\omega_1\wedge e^1+\omega_1\wedge d\,e^1+d\omega_2\wedge e^2+\omega_2\wedge d\,e^2\\
    =&(\theta(B)\omega_7\wedge e^1+\theta(C)\omega_7\wedge e^2)\wedge e^7+(\theta(A)\omega_1\wedge e^7+\theta(C)\omega_1\wedge e^2)\wedge e^1 \\
     &+\omega_1\wedge (x\,e^1+z\,e^2)\wedge e^7+(\theta(A)\omega_2\wedge e^7+\theta(B)\omega_2\wedge e^1)\wedge e^2+\omega_2\wedge (y\,e^1+w\,e^2)\wedge e^7\\
    =&\theta(B)\omega_7\wedge e^{17}+\theta(C)\omega_7\wedge e^{27}-\theta(A)\omega_1\wedge e^{17}-\theta(C)\omega_1\wedge e^{12}+x\,\omega_1\wedge e^{17}\\
     &+z\,\omega_1\wedge e^{27}-\theta(A)\omega_2\wedge e^{27}+\theta(B)\omega_2\wedge e^{12}+y\,\omega_2\wedge e^{17}+w\,\omega_2\wedge e^{27},
\end{align*}
which proves (ii). In order to prove (iii), we apply Proposition \ref{sls-prop} (v) to the above formula,
\begin{align*}
\ast d\vp =&\ast_{\ggo_1}(\theta(B)\omega_2-\theta(C)\omega_1)\wedge \ast_{\ggo_0}e^{12}+\ast_{\ggo_1}(\theta(B)\omega_7-\theta(A)\omega_1+x\omega_1+y\omega_2)\wedge \ast_{\ggo_0}e^{17}\\
           &+\ast_{\ggo_1}(\theta(C)\omega_7-\theta(A)\omega_2+z\omega_1+w\omega_2)\wedge \ast_{\ggo_0}e^{27}\\
           =&(-\theta(B^t)\omega_2+\theta(C^t)\omega_1)\wedge e^{7}-(-\theta(B^t)\omega_7+\theta(A^t)\omega_1+\tr{A}\omega_1+x\omega_1+y\omega_2)\wedge e^{2}\\
           &+(-\theta(C^t)\omega_7+\theta(A^t)\omega_2+\tr{A}\omega_2+z\omega_1+w\omega_2)\wedge e^{1}.
\end{align*}
The last equality follows from \eqref{sls-astg1}. In the same manner, we can se that (iv) and (v) hold.
\end{proof}

The following result may be proved in much the same way as Theorem \ref{sls-teo1}.
\begin{theorem}\label{sls-teo2}
Let $\ggo$ be a $7$-dimensional Lie algebra with Lie bracket determined by $A_1$, $A$, $B$ and $C$ as in \eqref{muABC} and let $\vp$ be the $G_2$-structure
defined in $\eqref{phi}$, one obtains that
\begin{itemize}
\item[{\rm (i)}] $\ast\vp=e^{3456}+\omega_7\wedge e^{12}+\omega_1\wedge e^{27}-\omega_2\wedge e^{17}$,\\
\item[{\rm (ii)}] $d\ast\vp=-\tr{A}e^{34567}+\left(\theta(A)\omega_7-\tr{A_1}\omega_7+\theta(B)\omega_1+\theta(C)\omega_2\right)\wedge e^{127}$, \\
\item[{\rm (iii)}] $\ast d\ast\vp=-\tr{A}e^{12}+\ast_{\ggo_1}\left(\theta(A)\omega_7-\tr{A_1}\omega_7+\theta(B)\omega_1+\theta(C)\omega_2\right)$,\\
\item[{\rm (iv)}] $d \ast d\ast\vp=\tr{A_1}\tr{A}e^{127}$
\item[]      $\qquad\qquad-\theta(A)((\tr{A_1}+\tr{A})\omega_7+\theta(A^t)\omega_7+\theta(B^t)\omega_1+\theta(C^t)\omega_2)\wedge e^7$
\item[]      $\qquad\qquad-\theta(B)((\tr{A_1}+\tr{A})\omega_7+\theta(A^t)\omega_7+\theta(B^t)\omega_1+\theta(C^t)\omega_2)\wedge e^1$
\item[]      $\qquad\qquad-\theta(C)((\tr{A_1}+\tr{A})\omega_7+\theta(A^t)\omega_7+\theta(B^t)\omega_1+\theta(C^t)\omega_2)\wedge e^2$.
\end{itemize}
\end{theorem}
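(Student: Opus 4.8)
The plan is to mimic the proof of Theorem~\ref{sls-teo1} step by step: each formula will be obtained by expanding with Proposition~\ref{sls-prop} and then simplifying with the three facts $\ast_{\ggo_1}\omega_i=\omega_i$ ($i=1,2,7$), $\theta(M)e^{3456}=-\tr{M}\,e^{3456}$ for $M\in\glg_4(\RR)$, and the commutation rule \eqref{sls-astg1}, always keeping in mind the unimodularity hypothesis $\tr{B}=\tr{C}=0$. For part (i) I would apply Proposition~\ref{sls-prop}(v) to each summand of $\vp=e^{127}+\omega_7\wedge e^7+\omega_1\wedge e^1+\omega_2\wedge e^2$ (Theorem~\ref{sls-teo1}(i)), using $\ast_{\ggo_1}1=e^{3456}$, $\ast_{\ggo_1}\omega_i=\omega_i$, and the Hodge star of $\ggo_0=\spann\{e_7,e_1,e_2\}$ oriented compatibly with Proposition~\ref{sls-prop}(v), namely $\ast_{\ggo_0}e^7=e^{12}$, $\ast_{\ggo_0}e^1=e^{27}$, $\ast_{\ggo_0}e^2=-e^{17}$; since the relevant exponents in $(-1)^{ij}$ are all even, no extra sign appears and part (i) follows.

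For part (ii) I would differentiate (i). Proposition~\ref{sls-prop}(i) gives $d(e^{3456})=\theta(A)e^{3456}\wedge e^7+\theta(B)e^{3456}\wedge e^1+\theta(C)e^{3456}\wedge e^2$, which collapses to $-\tr{A}\,e^{34567}$ since $\theta(M)e^{3456}=-\tr{M}\,e^{3456}$ and $\tr{B}=\tr{C}=0$. For the remaining three summands I would use $d\omega_i=\theta(A)\omega_i\wedge e^7+\theta(B)\omega_i\wedge e^1+\theta(C)\omega_i\wedge e^2$ (again Proposition~\ref{sls-prop}(i)), together with $d(e^{12})=-\tr{A_1}\,e^{127}$ and $d(e^{17})=d(e^{27})=0$ obtained from Proposition~\ref{sls-prop}(ii)--(iv). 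The vast majority of the resulting wedge products vanish through a repeated $e^1$, $e^2$ or $e^7$, and assembling the survivors produces $(\theta(A)\omega_7-\tr{A_1}\omega_7+\theta(B)\omega_1+\theta(C)\omega_2)\wedge e^{127}$, which together with the previous term is (ii).

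Part (iii) is then $\ast$ applied to (ii) via Proposition~\ref{sls-prop}(v): $\ast(e^{34567})=\ast_{\ggo_1}e^{3456}\wedge\ast_{\ggo_0}e^7=e^{12}$, while on the $\Lambda^2\ggo_1^*\wedge e^{127}$ term the sign $(-1)^{2\cdot3}$ equals $1$ and $\ast_{\ggo_0}e^{127}=1$, so what is left is the $\ast_{\ggo_1}$ of a $2$-form in $\ggo_1$, as stated. For part (iv) I would differentiate (iii): the summand $-\tr{A}\,e^{12}$ gives $\tr{A}\,\tr{A_1}\,e^{127}$ by the formula for $d(e^{12})$, and for the other summand I would first rewrite $\ast_{\ggo_1}\big(\theta(A)\omega_7-\tr{A_1}\omega_7+\theta(B)\omega_1+\theta(C)\omega_2\big)$ with the help of \eqref{sls-astg1}, $\ast_{\ggo_1}\omega_i=\omega_i$ and $\tr{B}=\tr{C}=0$, obtaining $-\big((\tr{A_1}+\tr{A})\omega_7+\theta(A^t)\omega_7+\theta(B^t)\omega_1+\theta(C^t)\omega_2\big)$, and then differentiate this $2$-form in $\ggo_1$ by Proposition~\ref{sls-prop}(i); the resulting $\wedge e^7$, $\wedge e^1$ and $\wedge e^2$ pieces are precisely the three displayed lines of (iv).

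The main obstacle is purely bookkeeping: controlling the signs produced by $(-1)^{ij}$ in Proposition~\ref{sls-prop}(v) and by the reorderings of the $e^i$, correctly identifying which wedge products die, and applying the transpose-and-trace correction of \eqref{sls-astg1} without slips. There is no conceptual difficulty; in particular the Jacobi-type relations among $A_1,A,B,C$ play no role in these identities and enter only once one assembles the Hodge Laplacian or the Ricci tensor out of them.
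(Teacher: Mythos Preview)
Your proposal is correct and follows precisely the approach the paper intends: the paper's own proof of Theorem~\ref{sls-teo2} consists of the single sentence ``The following result may be proved in much the same way as Theorem~\ref{sls-teo1},'' and what you have outlined is exactly that computation carried out in detail, using Proposition~\ref{sls-prop}, the self-duality $\ast_{\ggo_1}\omega_i=\omega_i$, the identity $\theta(M)e^{3456}=-(\tr M)e^{3456}$, the rule \eqref{sls-astg1}, and the unimodularity $\tr B=\tr C=0$. Your sign checks for $\ast_{\ggo_0}$ and for $d(e^{12})=-\tr{A_1}\,e^{127}$, as well as the rewriting of $\ast_{\ggo_1}(\theta(A)\omega_7-\tr{A_1}\omega_7+\theta(B)\omega_1+\theta(C)\omega_2)$ via \eqref{sls-astg1} before differentiating in part (iv), are all in order.
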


Let us now state two corollaries of Theorems \ref{sls-teo1} and \ref{sls-teo2}, respectively, which are useful to establish necessary and sufficient
conditions to determine if $\vp$ is closed or coclosed. Such formulas are then applied to calculate, in each case, the torsion forms in terms of $A_1$,
$A$, $B$ and $C$.
\begin{corollary}\label{sls-closed}
$\vp$ is closed if and only if
$$
\theta(A)\omega_1=\theta(B)\omega_7+x\omega_1+y\omega_2,\quad \theta(A)\omega_2=\theta(C)\omega_7+z\omega_1+w\omega_2,\quad
\theta(B)\omega_2=\theta(C)\omega_1.
$$
\end{corollary}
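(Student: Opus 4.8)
The plan is to read the result straight off the formula for $d\vp$ established in Theorem \ref{sls-teo1}(ii). That formula writes $d\vp$ as a sum of three terms, each of the shape $\eta\wedge\sigma$ with $\eta\in\Lambda^2\ggo_1^*$ and $\sigma$ ranging over $e^{12}$, $e^{17}$, $e^{27}$. First I would record the elementary linear-algebra fact that makes the passage from "$d\vp=0$" to "each coefficient vanishes" legitimate: for $\eta\in\Lambda^2\ggo_1^*$, each of the maps $\eta\mapsto\eta\wedge e^{12}$, $\eta\mapsto\eta\wedge e^{17}$, $\eta\mapsto\eta\wedge e^{27}$ into $\Lambda^4\ggo^*$ is injective (since $\{e_3,\dots,e_6\}$ together with any two of $\{e_1,e_2,e_7\}$ are linearly independent), and the three images $\Lambda^2\ggo_1^*\wedge e^{12}$, $\Lambda^2\ggo_1^*\wedge e^{17}$, $\Lambda^2\ggo_1^*\wedge e^{27}$ meet only in $0$, because a basis $4$-form in the first contains $e^1$ and $e^2$ but not $e^7$, in the second $e^1$ and $e^7$ but not $e^2$, and in the third $e^2$ and $e^7$ but not $e^1$.

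Granting that, $d\vp=0$ holds if and only if the three $\Lambda^2\ggo_1^*$-valued coefficients appearing in Theorem \ref{sls-teo1}(ii) each vanish, i.e.
\[
\theta(B)\omega_2-\theta(C)\omega_1=0,\qquad \theta(B)\omega_7-\theta(A)\omega_1+x\omega_1+y\omega_2=0,\qquad \theta(C)\omega_7-\theta(A)\omega_2+z\omega_1+w\omega_2=0.
\]
Moving the $\theta(A)$-terms to the left-hand side in the second and third equations, and rewriting the first, yields exactly the three stated identities $\theta(A)\omega_1=\theta(B)\omega_7+x\omega_1+y\omega_2$, $\theta(A)\omega_2=\theta(C)\omega_7+z\omega_1+w\omega_2$, and $\theta(B)\omega_2=\theta(C)\omega_1$.

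There is essentially no obstacle here; the proof is a one-step deduction from Theorem \ref{sls-teo1}. The only point that genuinely needs a word of justification — and the only thing I would be careful to state explicitly — is the claim that the $4$-form $d\vp$ vanishes \emph{iff all three of its components do}, as opposed to merely some linear combination of them vanishing; this is precisely what the complementary-subspace observation in the first paragraph secures.
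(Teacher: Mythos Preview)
Your proposal is correct and is exactly the approach the paper has in mind: the corollary is stated immediately after Theorem~\ref{sls-teo1} with no separate proof, so it is meant to be read off directly from the expression for $d\vp$ in part~(ii). Your explicit justification that the three $\Lambda^2\ggo_1^*$-components (wedged with $e^{12}$, $e^{17}$, $e^{27}$) are independent is a welcome bit of care that the paper leaves implicit.
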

In that case, the only torsion form that survives in \eqref{tor-for} is the $2$-form $\tau_2=-\ast d\ast\vp$ and the Laplacian equals
$\Delta\vp=d\tau_2=-d\ast d\ast\vp$. Both formulas can be obtained from Theorem \ref{sls-teo2} (iii),(iv).

\begin{corollary}
$\vp$ is coclosed if and only if
$$
\tr{A}=0,\qquad \theta(A)\omega_7+\theta(B)\omega_1+\theta(C)\omega_2=(\tr{A_1})\omega_7.
$$
\end{corollary}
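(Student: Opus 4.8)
The plan is to read the condition directly off Theorem~\ref{sls-teo2}(ii). By definition $\vp$ is coclosed exactly when $d\ast\vp=0$, and that theorem gives
$$
d\ast\vp=-\tr{A}\,e^{34567}+\bigl(\theta(A)\omega_7-\tr{A_1}\,\omega_7+\theta(B)\omega_1+\theta(C)\omega_2\bigr)\wedge e^{127}.
$$
Thus the whole argument reduces to showing that the two summands on the right-hand side lie in linearly independent coordinate subspaces of $\Lambda^5\ggo^*$; granting this, $d\ast\vp=0$ is equivalent to the simultaneous vanishing of both summands, which is precisely the asserted pair of conditions.

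For the first summand this vanishing reads $\tr{A}=0$. For the second, I would first note that
$$
\eta:=\theta(A)\omega_7-\tr{A_1}\,\omega_7+\theta(B)\omega_1+\theta(C)\omega_2
$$
lies in $\Lambda^2\ggo_1^*=\Lambda^2\spann\{e^3,e^4,e^5,e^6\}$, since $\omega_7,\omega_1,\omega_2\in\Lambda^2\ggo_1^*$ by \eqref{base-2for} and $\theta(M)$ preserves $\Lambda^2\ggo_1^*$ for every $M\in\glg_4(\RR)$ by \eqref{titaM-conTraza}. Hence every monomial occurring in $\eta\wedge e^{127}$ has the form $e^{ab127}$ with $a,b\in\{3,4,5,6\}$, in particular involves both indices $1$ and $2$, whereas $e^{34567}$ involves neither; so $\spann\{e^{34567}\}$ and $\spann\{e^{ab127}:a,b\in\{3,4,5,6\}\}$ meet only in $0$ inside $\Lambda^5\ggo^*$, and $d\ast\vp=0$ forces $\tr{A}=0$ together with $\eta\wedge e^{127}=0$.

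It remains to unwind $\eta\wedge e^{127}=0$. The map $\xi\mapsto\xi\wedge e^{127}$ is injective on $\Lambda^2\ggo_1^*$, because it carries the monomial basis $\{e^{34},e^{35},e^{36},e^{45},e^{46},e^{56}\}$ to six distinct monomials of $\Lambda^5\ggo^*$; therefore $\eta\wedge e^{127}=0$ is equivalent to $\eta=0$, i.e.\ to $\theta(A)\omega_7+\theta(B)\omega_1+\theta(C)\omega_2=(\tr{A_1})\,\omega_7$. Combining the two conditions gives the corollary. I expect no real obstacle: the one point to be careful about is the linear-independence bookkeeping just described, which is exactly what allows the single equation $d\ast\vp=0$ to split into the two separate equations in the statement.
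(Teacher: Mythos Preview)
Your argument is correct and is exactly the intended one: the paper states this as an immediate corollary of Theorem~\ref{sls-teo2}(ii), and the linear-independence bookkeeping you spell out is precisely the (implicit) reason the single equation $d\ast\vp=0$ splits into the two displayed conditions.
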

When this happens, the surviving torsion forms are $\tau_0=\tfrac{1}{7}\ast(\ast d\vp\wedge\vp)$ and $\tau_3=\ast d\vp-\tau_0\vp$ and the Hodge Laplacian
remains $\Delta\vp=\ast d\tau_3+\tau_0\ast d\vp=\ast d\ast d\vp$, whose formula can be seen in Theorem \ref{sls-teo1} (v).

\subsection{Torsion formulas for $(G_{A_1,A,B,C},\vp)$}

In the general case, beyond the closed and coclosed setting, the torsion forms can be also calculated in terms of $A_1$, $A$, $B$ and $C$. In the following
proposition we summarize the obtained results. We denote by $a_{ij}=\la\ad{e_7}(e_j),e_i\ra$ the coefficients of $A$, and analogously for $B$ and $C$.
\begin{proposition}\label{sls-torprop}
Let $\ggo$ be a $7$-dimensional Lie algebra with Lie bracket defined by $A_1$, $A$, $B$ and $C$ as in \eqref{muABC}. Consider the $G_2$-structure $\vp$
 defined in $\eqref{phi}$, the torsion forms can be calculated as follows,
\begin{itemize}
\item[{\rm (i)}] $\tau_0=\tfrac{2}{7}\left(a_{46}-a_{64}+a_{53}-a_{35}+b_{35}+b_{64}-b_{53}-b_{46}\right.\\
                     \left.\qquad+c_{54}+c_{63}-c_{45}-c_{36}+z-y\right)$,\\
\item[{\rm (ii)}] $\tau_1=-\tfrac{1}{12}(a_{64}+a_{35}-a_{46}-a_{53}+b_{43}+b_{65}-b_{34}-b_{56})\wedge e^2$
\item[]            $\qquad-\tfrac{1}{12}(a_{36}+a_{45}-a_{63}-a_{54}+c_{56}+c_{34}-c_{65}-c_{43})\wedge e^1$
\item[]            $\qquad-\tfrac{1}{12}(b_{63}+b_{54}-b_{36}-b_{45}+c_{46}+c_{53}-c_{64}-c_{35}+2(\tr{A_1}+\tr{A}))\wedge e^7$,\\
\item[{\rm (iii)}]$\tau_2=\tfrac{1}{3}(\tr{A}-2\tr{A_1}+b_{45}+b_{36}-b_{54}-b_{63}+c_{35}+c_{64}-c_{53}-c_{46}) e^{12}$
\item[]     $\quad  +\tfrac{1}{3}(a_{64}+a_{35}-a_{46}-a_{53}+b_{65}+b_{43}-b_{56}-b_{34}) e^{17}$
\item[]     $\quad  +\tfrac{1}{3}(a_{54}+a_{63}-a_{45}-a_{36}+c_{65}+c_{43}-c_{56}-c_{34}) e^{27}$
\item[]     $\quad  +\tfrac{1}{3}(\tr{A_1}-2a_{33}-2a_{44}+a_{55}+a_{66}+2c_{46}-2 c_{35}-2 b_{45}-2 b_{36}-c_{53}+c_{64}-b_{63}-b_{54})e^{34}$
\item[]     $\quad  +\tfrac{1}{3}(-2 a_{54}+2 a_{36}+2 c_{56}+2 c_{34}+a_{63}-a_{45}+c_{65}+c_{43}-3 b_{55}-3 b_{33})e^{35}$
\item[]     $\quad  +\tfrac{1}{3}(-2 a_{64}-2 a_{35}-2 b_{65}+2 b_{34}-a_{46}-a_{53}-b_{56}+b_{43}+3 c_{66}+3 c_{33})e^{36}$
\item[]     $\quad  +\tfrac{1}{3}(a_{64}+a_{35}+b_{65}-b_{34}+2 a_{46}+2 a_{53}+2 b_{56}-2 b_{43}+3 c_{55}+3 c_{44}) e^{45}$
\item[]     $\quad  +\tfrac{1}{3}(-a_{54}+a_{36}+c_{56}+c_{34}+2 a_{63}-2 a_{45}+2 c_{65}+2 c_{43}+3 b_{66}+3 b_{44})e^{46}$
\item[]     $\quad  +\tfrac{1}{3}(\tr{A_1}+a_{33}+a_{44}-2 a_{55}-2 a_{66}-c_{46}+c_{35}+b_{45}+b_{36}+2 c_{53}-2 c_{64}+2 b_{63}+2 b_{54})e^{56}$,\\
\item[{\rm (iv)}]$\tau_3=\tau_0\,e^{127}+(-\theta(B^t)\omega_2+\theta(C^t)\omega_1-\tau_0\omega_7-3\lambda_1\omega_2+3\lambda_2\omega_1)\wedge e^7$
\item[]     $\qquad+(\theta(B^t)\omega_7-\theta(A^t)\omega_1-(\tr{A}+x+3\lambda_7)\omega_1+(-y-\tau_0)\omega_2+3\lambda_1\omega_7)\wedge e^2$
\item[]     $\qquad+(-\theta(C^t)\omega_7+\theta(A^t)\omega_2+(z-\tau_0)\omega_1+(\tr{A}+w+3\lambda_7)\omega_2-3\lambda_2\omega_7)\wedge e^1$,
\end{itemize}
where $\lambda_1:=\la\tau_1,e^1\ra$, $\lambda_2:=\la\tau_1,e^2\ra$ and $\lambda_7:=\la\tau_1,e^7\ra$.
\end{proposition}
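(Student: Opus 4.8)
The plan is to obtain $\tau_0,\tau_1,\tau_2,\tau_3$ directly from the identities \eqref{tor-for} by substituting the explicit expressions for $d\vp$, $\ast d\vp$, $\ast d\ast\vp$, $\vp$ and $\ast\vp$ furnished by Theorems \ref{sls-teo1} and \ref{sls-teo2}. Every term in \eqref{tor-for} then becomes a wedge product of known forms followed by a Hodge star, and the only remaining task is to expand these in the orthonormal coframe $\{e^1,\dots,e^7\}$. For this I would repeatedly use: the derivation property of $\theta$ together with \eqref{sls-tita} to write $\theta(M)\omega_i$ as an explicit combination of the $e^{jk}$; Proposition \ref{sls-prop}(v) to split a star on $\ggo$ into $\ast_{\ggo_1}$ and $\ast_{\ggo_0}$; the rules $\ast_{\ggo_1}\overline{\omega}_i=-\overline{\omega}_i$, $\ast_{\ggo_1}\omega_i=\omega_i$; and the facts that $\Upsilon$ is orthogonal with $\|\omega_i\|=\|\overline{\omega}_i\|=2$ and that, by \eqref{titaM-conTraza}, each $\la\theta(M)\omega_i,\omega_j\ra$ and $\la\theta(M)\omega_i,\overline{\omega}_j\ra$ is a fixed linear functional of the entries of $M$. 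After the index shift identifying the columns of $A,B,C$ with $e^3,\dots,e^6$, these pairings are precisely the combinations of $a_{ij},b_{ij},c_{ij}$ recorded in the statement.

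I would compute in the order $\tau_0\to\tau_1\to\tau_2\to\tau_3$ forced by \eqref{tor-for}. For $\tau_0=\tfrac17\ast(d\vp\wedge\vp)$, wedge the $\Lambda^2\ggo_1^*$-valued coefficients of $d\vp$ from Theorem \ref{sls-teo1}(ii) against the $\omega_i\wedge e^i$ summands of $\vp$; since $\eta\wedge\omega_i=\la\eta,\omega_i\ra\,e^{3456}$ for $\eta\in\Lambda^2\ggo_1^*$, the top form collapses to the single scalar displayed in (i). For $\tau_1=-\tfrac{1}{12}\ast(\ast d\vp\wedge\vp)$, use $\ast d\vp$ from Theorem \ref{sls-teo1}(iii); of the twelve products with the summands of $\vp$, exactly six survive (those in which no $e^i$ is repeated), and they produce the three scalars that become the $e^1$-, $e^2$- and $e^7$-components of $\tau_1$. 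The term $2(\tr{A_1}+\tr{A})$ in the $e^7$-component comes exactly from the $x\omega_1,y\omega_2,z\omega_1,w\omega_2$ and $(\tr{A})\omega_i$ terms of Theorem \ref{sls-teo1}(iii), together with \eqref{sls-astg1}.

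For $\tau_2=-\ast d\ast\vp+4\ast(\tau_1\wedge\ast\vp)$, the first summand is read off from Theorem \ref{sls-teo2}(iii) after expanding $\ast_{\ggo_1}(\theta(A)\omega_7-\tr{A_1}\,\omega_7+\theta(B)\omega_1+\theta(C)\omega_2)$ in $\Upsilon$, and the second uses $\ast\vp$ from Theorem \ref{sls-teo2}(i) and the $\tau_1$ just computed; collecting coefficients along $e^{12},e^{17},e^{27},e^{34},e^{35},e^{36},e^{45},e^{46},e^{56}$ yields the nine lines of (iii). For $\tau_3=\ast d\vp-\tau_0\vp-3\ast(\tau_1\wedge\vp)$, combine Theorem \ref{sls-teo1}(iii), Theorem \ref{sls-teo1}(i), the $\tau_0,\tau_1$ already found, and evaluate $\ast(\tau_1\wedge\vp)$ via Proposition \ref{sls-prop}(v); writing $\tau_1=\lambda_1 e^1+\lambda_2 e^2+\lambda_7 e^7$ then reproduces (iv) verbatim.

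The main obstacle is bookkeeping, not ideas: keeping consistent the index shift between the $4\times4$ matrices $A,B,C$ and the coframe labels $3,4,5,6$; the decomposition \eqref{titaM-conTraza} of each $\theta(M)$ into its skew blocks $M_1,M_4$ and its off-diagonal block $M_2$, so that contributions land in the correct $\overline{\omega}$- versus $\omega$-components; and the Hodge-star signs on $5$- and $6$-forms of $\RR^7$. Since \eqref{tor-for} feeds $\tau_1$ into both $\tau_2$ and $\tau_3$, any sign error in $\tau_1$ propagates downstream, so I would treat the facts that $\tau_2$ must lie in $\Lambda^2_{14}\ggo^*$ and that every formula must degenerate correctly in the closed and coclosed cases (e.g. to $\tau_2=-\ast d\ast\vp$ under the hypotheses of Corollary \ref{sls-closed}) as the decisive consistency checks before declaring the list (i)--(iv) correct.
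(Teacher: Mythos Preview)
Your proposal is correct and follows essentially the same route as the paper's proof: both substitute the formulas of Theorems \ref{sls-teo1} and \ref{sls-teo2} into the identities \eqref{tor-for}, reduce the resulting wedge products to inner products via $\eta\wedge\omega_i=\la\eta,\omega_i\ra\,e^{3456}$ (the paper phrases this for $\tau_1$ as the observation that $\ast_{\ggo_1}\beta\wedge\alpha=\beta\wedge\alpha$ whenever $\ast_{\ggo_1}\alpha=\alpha$), and then read off the coefficients using Remark \ref{titaM}. The order $\tau_0\to\tau_1\to\tau_2\to\tau_3$, the intermediate computation of $\ast(\tau_1\wedge\ast\vp)$ and $\ast(\tau_1\wedge\vp)$ with $\tau_1=\lambda_1e^1+\lambda_2e^2+\lambda_7e^7$, and the final collection of coefficients are all exactly as in the paper.
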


To prove the proposition, we first state the following result, which follows from \eqref{sls-tita}.
\begin{remark}\label{titaM}
For each $M\in\glg_4(\RR)$,  with coefficients $M=[m_{ij}]$ for $i,j\in\{3,4,5,6\}$, we get,
\begin{align*}
\theta(M)\omega_7=&-(m_{33}+m_{44})e^{34}+(m_{63}-m_{45})e^{35}-(m_{46}+m_{53})e^{36}\\
                  &+(m_{64}+m_{35})e^{45}+(m_{36}-m_{54})e^{46}-(m_{55}+m_{66})e^{56},\\
\theta(M)\omega_1=&-(m_{54}+m_{63})e^{34}-(m_{33}-m_{55})e^{35}+(m_{43}-m_{56})e^{36}\\
                  &+(m_{65}-m_{34})e^{45}+(m_{44}+m_{66})e^{46}+(m_{45}+m_{36})e^{56},\\
\theta(M)\omega_2=&\,(m_{64}-m_{53})e^{34}+(m_{43}+m_{65})e^{35}+(m_{33}+m_{66})e^{36}\\
                  &+(m_{44}+m_{55})e^{45}+(m_{56}+m_{54})e^{46}+(m_{35}-m_{46})e^{56}.
\end{align*}
\end{remark}

\begin{proof}
By Theorem \ref{sls-teo1} (i) and (ii), we can prove that,
\begin{align*}
d\vp\wedge\vp=&\left((\theta(B)\omega_2-\theta(C)\omega_1)\wedge\omega_7-(\theta(B)\omega_7-\theta(A)\omega_1+x\omega_1+y\omega_2)\wedge \omega_2\right.\\
              &\left.+(\theta(C)\omega_7-\theta(A)\omega_2+z\omega_1+w\omega_2)\wedge\omega_1\right)\wedge e^{127}\\
             =&2\left(\theta(A)\omega_1\wedge\omega_2+\theta(B)\omega_2\wedge\omega_7+\theta(C)\omega_7\wedge\omega_2+z-y\right)\wedge e^{127},
\end{align*}
then it follows from \eqref{tor-for} that,
$$
\tau_0=\tfrac{1}{7}\ast(d\vp\wedge\vp)=\tfrac{2}{7}\ast_{\ggo_1}(\theta(A)\omega_1\wedge\omega_2+\theta(B)\omega_2\wedge\omega_7+\theta(C)\omega_7\wedge\omega_2+z-y),
$$
and the requested formula for $\tau_0$ follows by applying Remark \ref{titaM}.

For $\tau_1$, we first note that if $\alpha\in\Lambda^2\ggo_1^*$ is such that $\ast_{\ggo_1}\alpha=\alpha$, then
$\ast_{\ggo_1}\beta\wedge\alpha=\beta\wedge\alpha$, for any $\beta\in\Lambda^2\ggo_1^*$. Indeed,
$$
\ast_{\ggo_1}\beta\wedge\alpha=\la\beta,\alpha\ra e^{3456}=\beta\wedge\ast_{\ggo_1}\alpha=\beta\wedge\alpha.
$$
By theorem \ref{sls-teo1}, we have that,
\begin{align*}
\ast
d\vp\wedge\vp=&\left(-\ast_{\ggo_1}(\theta(B)\omega_2-\theta(C)\omega_1)\wedge\omega_1+\ast_{\ggo_1}(\theta(C)\omega_7-\theta(A)\omega_2+z\omega_1+w\omega_2)\wedge\omega_7 \right)\wedge e^{17}\\
                   &-\left(\ast_{\ggo_1}(\theta(B)\omega_2-\theta(C)\omega_1)\wedge\omega_2+\ast_{\ggo_1}(\theta(B)\omega_7-\theta(A)\omega_1+x\omega_1+y\omega_2)\wedge\omega_7\right)\wedge e^{27}\\
                   &+\ast_{\ggo_1}(\theta(B)\omega_7-\theta(A)\omega_1+x\omega_1+y\omega_2)\wedge\omega_1\wedge e^{12}\\
                   &+\ast_{\ggo_1}(\theta(C)\omega_7-\theta(A)\omega_2+z\omega_1+w\omega_2)\wedge\omega_2\wedge e^{12}\\
                  =&(\theta(B)\omega_1+\theta(A)\omega_7)\wedge\omega_2\wedge e^{17}-(\theta(C)\omega_2+\theta(A)\omega_7)\wedge\omega_1 \wedge e^{27}\\
                   &-(\theta(B)\omega_1+\theta(C)\omega_2)\wedge\omega_7\wedge e^{12}+2(\tr{A}+\tr{A_1})e^{123456}.
\end{align*}
Hence, from \eqref{tor-for} it follows that,
\begin{align*}
\tau_1=&-\tfrac{1}{12}\ast(\ast d\vp\wedge\vp)\\
      =&-\la\theta(B)\omega_1+\theta(A)\omega_7,\omega_2\ra e^{2}-\la\theta(C)\omega_2+\theta(A)\omega_7,\omega_1\ra e^{1}\\
       &-\la\theta(B)\omega_1+\theta(C)\omega_2,\omega_7\ra e^{7}+2(\tr{A}+\tr{A_1})e^{7},
\end{align*}
and using the result given in Remark \ref{titaM}, we obtain the desired conclusion for $\tau_1$.

For simplicity of notation, we named $\lambda_1$, $\lambda_2$ and $\lambda_7$ the coefficients of $\tau_1$ such that $\tau_1=\lambda_1 e^1+\lambda_2
e^2+\lambda_7 e^7$; we therefore obtain that,
$$
\ast(\tau_1\wedge\ast_\vp)=\lambda_1 e^{27}-\lambda_2 e^{17}+\lambda_7 e^{12}+\lambda_1\omega_1+\lambda_2\omega_2+\lambda_7\omega_7,
$$
which in addition to Theorem \ref{sls-teo2} (iii) imply that
\begin{align*}
\tau_2=&-\ast d\ast\vp+4 \ast (\tau_1\wedge\ast\vp),\\
      =&\tr{A} e^{12}+(\tr{A_1}+\tr{A})\omega_7+\theta(A^t)\omega_7+\theta(B^t)\omega_1+\theta(C^t)\omega_2\\
      &+4\left(\lambda_1 e^{27}-\lambda_2 e^{17}+\lambda_7 e^{12}+\lambda_1\omega_1+\lambda_2\omega_2+\lambda_7\omega_7\right)\\
      =&(\tr{A}+4\lambda_7) e^{12}+4\lambda_1 e^{27}-4\lambda_2 e^{17}+(\tr{A_1}+\tr{A}+4\lambda_7)\omega_7\\
      &+4\lambda_1\omega_1+4\lambda_2\omega_2+\theta(A^t)\omega_7+\theta(B^t)\omega_1+\theta(C^t)\omega_2.
\end{align*}
The expected formula in (iii) follows by applying Remark \ref{titaM} to the above equality.

To conclude, we calculate
$$
\ast(\tau_1\wedge\vp)=(\lambda_1\omega_2-\lambda_2\omega_1)\wedge e^{7}-(\lambda_1\omega_7-\lambda_7\omega_1)\wedge e^{2}+
(\lambda_2\omega_7-\lambda_7\omega_2)\wedge e^{1},
$$
and by \eqref{tor-for}, it follows that,
\begin{align*}
\tau_3=&\ast d\vp-\tau_0\vp-3\ast(\tau_1\wedge\vp)\\
      =&\tau_0\,e^{127}+(-\theta(B^t)\omega_2+\theta(C^t)\omega_1-\tau_0\omega_7-3\lambda_1\omega_2+3\lambda_2\omega_1)\wedge e^7\\
       +&(\theta(B^t)\omega_7-\theta(A^t)\omega_1-(\tr{A}+x+3\lambda_7)\omega_1+(-y-\tau_0)\omega_2+3\lambda_1\omega_7)\wedge e^2\\
       +&(-\theta(C^t)\omega_7+\theta(A^t)\omega_2+(z-\tau_0)\omega_1+(\tr{A}+w+3\lambda_7)\omega_2-3\lambda_2\omega_7)\wedge e^1,
\end{align*}
which completes the proof.\end{proof}

Note that if we apply the results given in Remark \ref{titaM} we will obtain a precise formula for $\tau_3$ in terms of the coefficients of $A_1$, $A$, $B$
and $C$ which define the Lie algebra $\ggo$.

\subsection{Ricci formula for $(G_{A_1,A,B,C},\vp)$}\label{ricci-sec}

Another operator that can be computed in terms of $A_1$, $A$, $B$ and $C$ is the Ricci operator, which is useful to prove, for example, that two
$G_2$-structures are not equivalent. In terms of the orthogonal decomposition $\ggo_0\oplus\ggo_1=\spann\{e_7,e_1,e_2\}\oplus\spann\{e_3,e_4,e_5,e_6\}$,
one can calculate the Ricci operator according to \cite[(25)]{solvsolitons} and obtain $\Ricci|_{\ggo_0\times\ggo_1}=0$,
$$\Ricci|_{\ggo_1}=\frac{1}{2} \left([A,A^t]+ [B,B^t]+ [C,C^t]\right)- (\tr{A_1}+\tr{A})S_A,$$
and
$$
\Ricci|_{\ggo_0}=\left[\begin{array}{c|c}
&   \\
-\tr(S_A^2) & -\tr(S_A\,B)\qquad \qquad -\tr (S_A\,C)   \\  &\\ \hline &  \\
 \begin{matrix}
                 -\tr (S_A\,B)  \\
                 -\tr (S_A\,C)\end{matrix}
                &        - \left[\begin{matrix}
                 \tr (S_B^2)  & \tr (S_B\,C)\\
                 \tr (S_B\,C)  &  \tr (S_C^2)
                \end{matrix}\right]+ \frac{1}{2} [A_1,A_1^t]-(\tr{A_1}+\tr{A})S_{A_1}\\ & \\
\end{array}\right],
$$
where $S_M=S(M)$ denotes the symmetric part of the matrix $M$, in other words $S_M=\tfrac{M+M^t}{2}$.

\section{Eigenforms}\label{sec-EF}

In this section, we apply the formulas obtained in the above section to prove that there is no any closed eigenform on the family of $G_2$-structures
$\{(G_{A_1,A,B,C},\vp)\}$, such that the torsion $2$-form is of the form $\tau_2=a\,e^{12}+b\,e^{34}+c\,e^{56}$.

On a differentiable manifold $M$, a $G_2$-structure $\vp$ is said to be an \emph{eigenform} if
\begin{equation}\label{EF}
 \Delta\vp=\lambda \vp, \qquad \mbox{for some } \lambda\in\RR.
\end{equation}

If in addition $\vp$ is closed, Lauret proved in \cite[Lemma 3.4]{LS-ERP} that in the homogeneous case $\lambda=|\tau|^2/7$. The intriguing question is
whether there exists a closed $G_2$-structure that is also an eigenform. For this reason, we study condition \eqref{EF} on the large family of
left-invariant $G_2$-structures $\{(G_{A_1,A,B,C},\vp)\}$ defined in the above section.

Recall from Corollary \ref{sls-closed} that if $\vp$ is closed, then the only surviving torsion form is $\tau_2$.
\begin{proposition} A closed eigenform $(G_{A_1,A,B,C},\vp)$ such that the torsion form is $\tau_2=a\,e^{12}+b\,e^{34}+c\,e^{56}$, for some $a+b+c=0$, must be torsion-free.
\end{proposition}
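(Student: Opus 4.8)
The plan is to exploit two facts simultaneously: first, that $\vp$ is closed, so Corollary \ref{sls-closed} gives three relations among $\theta(A)\omega_i$, $\theta(B)\omega_i$, $\theta(C)\omega_i$, and the Laplacian is $\Delta\vp = d\tau_2 = -d\ast d\ast\vp$ with $\tau_2 = -\ast d\ast\vp$ given explicitly by Theorem \ref{sls-teo2}(iii); second, that the eigenform condition $\Delta\vp = \lambda\vp$ combined with Lauret's normalization $\lambda = |\tau_2|^2/7$ (cited from \cite[Lemma 3.4]{LS-ERP}) forces $\lambda \ge 0$, with $\lambda = 0$ exactly when $\tau_2 = 0$. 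So it suffices to show that the hypothesis $\tau_2 = a\,e^{12}+b\,e^{34}+c\,e^{56}$ with $a+b+c=0$ is incompatible with $\lambda > 0$, i.e. that it forces $\lambda \le 0$, hence $\lambda = 0$ and $\tau_2 = 0$.

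First I would substitute the assumed shape of $\tau_2$ into the coclosed-type formula. From Theorem \ref{sls-teo2}(iii), $\tau_2 = -\ast d\ast\vp = \tr{A}\,e^{12} - \ast_{\ggo_1}(\theta(A)\omega_7 - \tr{A_1}\omega_7 + \theta(B)\omega_1 + \theta(C)\omega_2)$; note the $e^{12}$-component pins down $\tr A = a$, while the $\Lambda^2\ggo_1^*$-part, call it $\sigma := b\,e^{34} + c\,e^{56}$, must equal $-\ast_{\ggo_1}$ of that combination. Since $\ast_{\ggo_1}$ acts as $-\id$ on the $\overline{\omega}_i$ and $+\id$ on the $\omega_i$, and $e^{34} = \tfrac12(\omega_7 + \overline{\omega}_7)$, $e^{56} = \tfrac12(\omega_7 - \overline{\omega}_7)$, I can read off the $\overline{\omega}$- and $\omega$-components of $\theta(A)\omega_7 + \theta(B)\omega_1 + \theta(C)\omega_2$. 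Then I would compute $\Delta\vp = -d\ast d\ast\vp$ from Theorem \ref{sls-teo2}(iv) and expand it against the basis $\{e^{127}, \omega_7\wedge e^7, \omega_1\wedge e^1, \omega_2\wedge e^2,\dots\}$ in which $\vp$ itself is $e^{127}+\omega_7\wedge e^7+\omega_1\wedge e^1+\omega_2\wedge e^2$. Matching $\Delta\vp = \lambda\vp$ componentwise yields algebraic equations; the $e^{127}$-component gives $\lambda = -\tr{A_1}\tr{A} = -a\,\tr{A_1}$, and the remaining components relate $\theta(A),\theta(B),\theta(C)$ applied to the $\omega_i$ to $\lambda$ times the $\omega_i$, using the closedness relations to eliminate $\theta(A)\omega_1, \theta(A)\omega_2, \theta(B)\omega_2$.

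The heart of the argument will be a positivity/trace computation. I expect that combining the matched equations with the closedness relations of Corollary \ref{sls-closed} forces a relation of the form $\lambda \cdot(\text{something}) = -(\text{sum of squares})$: concretely, pairing the $\omega_i\wedge e^i$ components of $\Delta\vp = \lambda\vp$ with the appropriate forms and summing should produce $7\lambda = |\tau_2|^2$ on one side (recovering Lauret's identity intrinsically) while the structural constraints $\tr B = \tr C = 0$, the hypothesis $a+b+c=0$, and the Jacobi relations $[A,B]=xB+yC$ etc. force the contributions of $\theta(B), \theta(C)$ and the off-diagonal parts to cancel or have a definite sign, leaving $\lambda \le 0$. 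The cleanest route is probably: show $|\tau_2|^2 = \tfrac{?}{?}\big(\|\theta(B)\omega_1 + \theta(C)\omega_2 - (\text{trace terms})\|^2 + \dots\big)$ and separately $7\lambda = -$ (a nonnegative combination coming from $\tr{A_1}\tr{A}$ and the skew parts), so both are forced to vanish. The main obstacle is bookkeeping: one must carefully track how the six relations (three from closedness, the $\tau_2$-shape, $a+b+c=0$, $\tr A = a$) interact inside the $6\times 6$ matrix representation \eqref{titaM-conTraza} of $\theta$, and verify that no hidden nonzero solution with $\lambda > 0$ survives — in particular ruling out the possibility that $a \ne 0$ with $\tr{A_1} < 0$ balancing things, which is where the $a+b+c=0$ hypothesis must do its work by constraining the diagonal of $\theta(A)\omega_7$ relative to $b-c$. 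Once every component equation is on the table, the conclusion $\tau_2 = 0$ (equivalently $a=b=c=0$ and the $e^{17},e^{27},e^{35},\dots$ components all vanish) should follow, and torsion-freeness is then immediate since $\tau_2$ is the only surviving torsion form in the closed case.
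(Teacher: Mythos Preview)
Your setup through the $e^{127}$-component is right and matches the paper: $\tr A = a$ from the $e^{12}$-part of $\tau_2$, then $\Delta\vp$ from Theorem~\ref{sls-teo2}(iv), giving $\lambda = -\tr A_1\,\tr A$. What you have missed is that the hypothesis $a+b+c=0$, once you match the $\omega_7$-coefficient of $\alpha := \tau_2|_{\Lambda^2\ggo_1^*}$, already forces $\tr A_1 = x+w = -\tr A$; hence $\lambda = (\tr A)^2 \ge 0$, the \emph{opposite} sign from the $\lambda \le 0$ you are hoping to extract. So your proposed mechanism (a global sum-of-squares forcing $\lambda\le 0$, with Jacobi and $\tr B=\tr C=0$ doing the cancellation) is aimed in the wrong direction and remains entirely speculative; the Jacobi relations are in fact never used.

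The paper's ``heart'' is far more localized. Write $\alpha = \tfrac{b-c}{2}\,\overline\omega_7 + \tfrac{b+c}{2}\,\omega_7$; the eigenform condition from Theorem~\ref{sls-teo2}(iv) reads $\theta(A)\alpha = \lambda\omega_7$, $\theta(B)\alpha = \lambda\omega_1$, $\theta(C)\alpha = \lambda\omega_2$. Pairing each with $\overline\omega_7$ gives zero on the right, and in the basis $\Upsilon$ (using the explicit shape~\eqref{titaM-conTraza} of $\theta(M)$ together with the closedness constraints you already listed) these three scalar equations pin down the entries $a_{15}=a_{16}=0$ and $a_{14}=-b-\tfrac{1}{2}\tr A$, provided $\tr A\ne 0$. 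Then pairing $\theta(A)\alpha=\lambda\omega_7$ with $\omega_7$ yields a \emph{second} expression $\lambda = -b^2 - b\,\tr A$. Equating the two gives
\[
(\tr A)^2 + b\,\tr A + b^2 = 0,
\]
a positive-definite quadratic in $(\tr A,b)$, forcing $\tr A = b = 0$ --- contradicting $\tr A\ne 0$. Thus $\lambda = 0$ and $\tau_2 = 0$.

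So the genuine gap in your proposal is that the decisive step is only gestured at: you have not identified which components to pair, and the anticipated inequality points the wrong way. The actual argument needs only four targeted inner products rather than a sum over all components.
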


\begin{proof}
Suppose that $\ggo$ is the Lie algebra with Lie bracket defined by $A_1$, $A$, $B$ and $C$ as in \eqref{muABC}. Consider the matrices $\theta(A)$,
$\theta(B)$, $\theta(C)\in\glg_6(\RR)$ in the basis $\Upsilon$ of $\Lambda^2\ggo^*$. From \eqref{titaM-conTraza}, we can write
$$
\theta(A)=\left[\begin{smallmatrix}
0&a_{12}&a_{13}   & a_{14}&a_{15}&a_{16}\\
-a_{12}&0&a_{23}  & a_{24}&a_{25}&a_{26}\\
-a_{13}&-a_{23}&0 & a_{34}&a_{35}&a_{36}\\
a_{14}&a_{24}&a_{34}& 0 & a_{45} & a_{46}\\
a_{15}&a_{25}&a_{35}& -a_{45} & 0 & a_{56} \\
a_{16}&a_{26}&a_{36}& -a_{46} & -a_{56} &0
\end{smallmatrix}\right]-\tfrac{\tr{A}}{2}\id.
$$
The same notation can be used to write $\theta(B)$ and $\theta(C)$ with coefficients $b_{ij}$ and $c_{ij}$, respectively. Note that, for $B$ and $C$, the
multiple of the identity vanishes in both cases since $\hg$ is unimodular and so $\tr{B}=\tr{C}=0$. From Corollary \ref{sls-closed}, the condition for
$\vp$ to be closed implies that,
$$
\theta(B)^6=\theta(C)^5,\quad \theta(A)^5=\theta(B)^4+x\,\id^5+y\,\id^6,\quad  \theta(A)^6=\theta(C)^4+z\,\id^5+w\,\id^6,
$$
where the superscripts denote the column vector of the matrix. It is immediate that
\begin{align*}
b_{j4}=a_{j5},\quad c_{j4}=a_{j6},\quad c_{j5}=b_{j6},\quad b_{56}=c_{56}=a_{45}=a_{46}= 0,  \\
b_{46}=c_{45}=\tfrac{z+y}{2},\quad b_{45}=\tfrac{\tr{A}}{2}+x,\quad c_{46} =\tfrac{\tr{A}}{2}+w,\quad a_{56} =\tfrac{z-y}{2},
\end{align*}
for $j=1,2,3$. Recall from Corollary \ref{sls-closed} and Theorem \ref{sls-teo2}, that $\tau=\tr{A}e^{12}+\alpha$ for
$$
\alpha=(x+w+\tr{A})\omega_7+\theta(A^t)\omega_7+\theta(B^t)\omega_1+\theta(C^t)\omega_2.
$$
Hence, if we assume $\tau=a\,e^{12}+b\,e^{34}+c\,e^{56}$, then $\tr{A}=a$ and
$$
\alpha=\tfrac{b-c}{2}\,\overline{\omega}_7+\tfrac{b+c}{2}\,\omega_7.
$$
From this and the fact that $a+b+c=0$, it follows that $\tr{A}=-x-w$ and
$$
c_{16}=-a_{14}-b_{15}+b+\tfrac{\tr{A}}{2},\quad c_{26}=-a_{24}-b_{25},\quad c_{36} = -a_{34}-b_{35},\quad c = -\tr{A}-b.
$$
If in addition $\vp$ is an eigenform, then exists $\lambda\in\RR$ such that $\Delta\vp=d\tau=\lambda\,\vp$. Moreover, by \cite[Lemma 3.4]{LS-ERP} one
obtains that $\lambda=\tfrac{|\tau|^2}{7}$. Therefore, by applying the formula of Theorem \ref{sls-teo2} (iv), we obtain $\lambda=-(x+w)\tr{A}=\tr{A}^2$
and
\begin{align*}
\lambda\,\omega_7=&\theta(A)\alpha=\theta(A)\left(\tfrac{b-c}{2}\,\overline{\omega}_7+\tfrac{b+c}{2}\,\omega_7\right),\\
\lambda\,\omega_1=&\theta(B)\alpha=\theta(B)\left(\tfrac{b-c}{2}\,\overline{\omega}_7+\tfrac{b+c}{2}\,\omega_7\right),\\
\lambda\,\omega_2=&\theta(C)\alpha=\theta(C)\left(\tfrac{b-c}{2}\,\overline{\omega}_7+\tfrac{b+c}{2}\,\omega_7\right).
\end{align*}
In particular,
$$
0=\la\theta(B)\alpha,\overline{\omega}_7\ra=\la\alpha,\theta(B)^t\overline{\omega}_7\ra=\la
\left(b+\tfrac{\tr{A}}{2}\right)\id^1-\tfrac{\tr{A}}{2}\id^4,\theta(B)^1\ra=-\tfrac{\tr{A}}{2}\,a_{15},
$$
where again the superscripts denote the columns of the matrix of the operator in the basis $\Upsilon$. Since $\lambda=0$ implies that $\tau=0$, we can
assume that $\tr{A}$ does not vanish, thus $a_{15}=0$. The same argument follows from
$$
0=\la\theta(C)\alpha,\overline{\omega}_7\ra=-\tfrac{\tr{A}}{2}\,a_{16},
$$
in order to prove that $a_{16}=0$. We can also calculate
$$
0=\la\theta(A)\alpha,\overline{\omega}_7\ra=-\tfrac{1}{2}\tr{A}\left(b+\tfrac{\tr{A}}{2}+a_{14}\right),
$$
and since $\tr{A}\neq 0$, thus $a_{14}=-b-\tfrac{\tr{A}}{2}$. Using the following equation,
$$
\tr{A}^2=\lambda=\la\theta(A)\alpha,\omega_7\ra=-b^2-\tr{A}\,b,
$$
we obtain that $\tr{A}=b=$, which is a contradiction and completes the proof.
\end{proof}

\section{Solitons}\label{sec-sol}

In this section we give a new family of Laplacian solitons that provides the second known family of shrinking Laplacian solitons as well as a second
example of a steady Laplacian soliton that is not ERP. In order to do so, we first recall some insights about solitons, not only for the Laplacian flow,
but also for the Ricci flow.  We also overview two examples of Laplacian solitons: one consists of an one-parameter family of Laplacian solitons given by
Lauret, and the second one is a steady Laplacian soliton given by Fino and Raffero.

\subsection{Preliminaries}
\begin{definition}\cite{solvsolitons}
Given a Lie algebra $\ggo$ and and inner product $\ip$ on $\ggo$, we say that $(\ggo,\ip)$ is an (algebraic) \emph{Ricci soliton} if there exist $c\in\RR$
and  $D\in\Der(\ggo)$ such that:
\begin{equation}\label{ricsol}
\Ricci=c \id + D,
\end{equation}
where $\Ricci$ is the Ricci operator of the left-invariant metric induced by $\ip$ on the simply connected Lie group $G$ with Lie algebra $\ggo$. We say
that the Ricci soliton is expanding if $c<0$, steady if $c=0$ or shrinking if $c>0$.
\end{definition}
We denote by $\ricci$ the Ricci tensor, it is proved in \cite{solvsolitons} that,
$$
\ricci(g)=c g-\tfrac{1}{2}\lca_{X_D}g,
$$
for $X_D$ the left-invariant vector field on the simply connected Lie group $G$ with Lie algebra $\ggo$, defined by
\begin{equation}\label{XD}
X_D(p)=\ddt\big{|}_0 f_t(p), \quad \forall p\in G,
\end{equation}
where $f_t\in\Aut(G)$ is the unique automorphism such that $df_t|_e=e^{tD}$. In particular, $(G,\vp)$ is a self-similar solution, that is
$$
\vp(t)=c(t)f(t)^*\vp, \quad\mbox{for some } c(t)\in\RR^* \mbox{ and } f(t)\in\Aut(G),
$$
for the Ricci flow:
$$\dpar
g(t)=-2\Ricci(g(t)).
$$

In 1992, Bryant introduced the \emph{Laplacian flow} for $G_2$-structures given by,
\begin{equation}\label{LF}
\dpar\vp(t) = \Delta_{\vp(t)}\vp(t),
\end{equation}
where $\vp(t)$ is a $1$-parameter family of closed $G_2$-structures on a $7$-dimensional differentiable manifold $M$. It is known that a closed
$G_2$-structure $\vp$ on $M$ flows in a self-similar way, in the sense that the solutions $\vp(t)$ have the form
$$
\vp(t)=c(t)f(t)^*\vp, \quad\mbox{for some } c(t)\in\RR^* \mbox{ and } f(t)\in\Diff(M),
$$
if and only if
$$
\Delta_\vp\vp=c\vp+\lca_{X}\vp, \qquad \mbox{for some}\quad c\in\RR, \quad X\in\mathfrak{X}(M)\; \mbox{(complete)},
$$
where $\lca_X$ denotes the Lie derivative along the field $X$, in which case $c(t)=\left(\frac{2}{3}ct+1\right)^{3/2}$. Analogous to the terminology used
in the Ricci flow theory, we call $\vp$ a {\it Laplacian soliton} and we say that it is {\it expanding}, {\it steady} or {\it shrinking}, if $c>0$, $c=0$
or $c<0$, respectively.

In the particular case where $M=G$ is a simply connected Lie group with Lie algebra $\ggo$ and $\vp$ is a left-invariant form on $G$, one has the following
more friendly definition to work with. Note that a left-invariant $G_2$-structure on $G$ is determined by its value at the identity, or equivalently, by a
$3$-form on the Lie algebra $\ggo$.
\begin{definition}\label{preli-sol}\cite{LF,Hom-sol} Given a $7$-dimensional Lie algebra $\ggo$ and $\vp$ a left-invariant $G_2$-structure on $G$, we say that $(\ggo, \vp)$ is a semi-algebraic {\it Laplacian soliton} if there exist $D\in\Der(\ggo)$
and $c\in\RR$ such that:
\begin{equation}\label{eq-sol}
\Delta\vp = c\,\vp+\lca_{X_D}\vp,
\end{equation}
where $X_D$ is the left-invariant field on $G$ defined as in \eqref{XD}. Equivalently, $\vp$ is a semi-algebraic Laplacian soliton on $G$ if there exist
$c(t)\in\RR^*$ and $f(t)\in\Aut(G)$ such that:
$$
\vp(t)=c(t)f(t)^*\vp
$$
is a solution to the Laplacian flow $\ddt\vp(t)=\Delta\vp$.
\end{definition}
Note that if $(\ggo,\vp)$ is a (semi-algebraic) Laplacian soliton then $(G,\vp)$ is a Laplacian soliton. To simplify notation, we continue to write $\vp$
for the $3$-form on the Lie algebra $\ggo$.

\begin{lemma}\label{preli-lieder}
$\lca_{X_D}\alpha=-\theta(D)\alpha$, for every $\alpha\in\Lambda^k\ggo^*$.
\end{lemma}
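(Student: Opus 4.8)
The plan is to reduce the identity to the infinitesimal action of $D$ on $\ggo^*$ and then invoke the fact that both sides are derivations of the exterior algebra, so it suffices to check the equality on $1$-forms. First I would unwind the definition of $X_D$ from \eqref{XD}: since $f_t\in\Aut(G)$ is the unique automorphism with $df_t|_e=e^{tD}$, the one-parameter group $f_t$ acts on left-invariant forms by pullback, and the Lie derivative $\lca_{X_D}\alpha = \ddt\big|_0 f_t^*\alpha$. For a left-invariant $k$-form $\alpha$, $(f_t^*\alpha)_e = (df_t|_e)^*\alpha_e = (e^{tD})^*\alpha_e$, where $(e^{tD})^*$ denotes the transpose (pullback) action on $\Lambda^k\ggo^*$. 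Differentiating at $t=0$ gives $\lca_{X_D}\alpha = \tfrac{d}{dt}\big|_0 (e^{tD})^*\alpha$, and this derivative is precisely the derived representation of $\glg(\ggo)$ on $\Lambda^k\ggo^*$ evaluated at $D$.

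Next I would identify this derived representation with $-\theta(D)$. By the definition \eqref{def-tita} of $\theta$, for $M\in\glg(\ggo)$ and $\alpha\in\Lambda^k\ggo^*$ one has $\theta(M)\alpha(v_1,\dots,v_k) = -\sum_i \alpha(v_1,\dots,Mv_i,\dots,v_k)$. On the other hand, the standard formula for the derived action of $\Gl(\ggo)$ on $\Lambda^k\ggo^*$ by pullback of the inverse — or directly by $(e^{tM})^*$ — gives $\tfrac{d}{dt}\big|_0 (e^{tM})^*\alpha\,(v_1,\dots,v_k) = \sum_i \alpha(v_1,\dots,Mv_i,\dots,v_k) = -\theta(M)\alpha(v_1,\dots,v_k)$. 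Taking $M=D$ yields $\lca_{X_D}\alpha = -\theta(D)\alpha$, as desired. Alternatively, and perhaps more cleanly, one checks this on $1$-forms: for $\alpha\in\ggo^*$, $(f_t^*\alpha)(v) = \alpha(e^{tD}v)$, so $\lca_{X_D}\alpha(v) = \alpha(Dv) = -\theta(D)\alpha(v)$ by \eqref{def-tita} with $k=1$; since $\lca_{X_D}$, $\theta(D)$ and $d$ are all derivations of $\Lambda^\bullet\ggo^*$ (the latter because $\theta(D)$ is a derivation by the remark following \eqref{def-tita}, or because $D$ acts by derivations), and every element of $\Lambda^k\ggo^*$ is a sum of wedge products of $1$-forms, the identity propagates to all degrees.

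I do not expect a serious obstacle here; the lemma is essentially a bookkeeping statement translating the geometric Lie-derivative-by-$X_D$ into the algebraic operator $\theta(D)$. The only point requiring mild care is the sign and the direction of the action: one must be consistent about whether $f_t^*$ pulls back by $e^{tD}$ or by its inverse, and the excerpt fixes the convention via $df_t|_e = e^{tD}$, so $f_t^*$ acts on $\ggo^*$ as the transpose of $e^{tD}$ (not its inverse-transpose), which is exactly what makes the sign come out as $-\theta(D)$ rather than $+\theta(D)$. I would state this convention explicitly at the start of the proof to avoid ambiguity.
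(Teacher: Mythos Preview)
Your proposal is correct and matches the paper's approach: the paper reduces to $1$-forms by noting both sides are derivations of $\Lambda^\bullet\ggo^*$, then computes $\lca_{X_D}e^i(X)=\ddt\big|_0 e^i(e^{tD}X)=e^i(DX)$, which is $-\theta(D)e^i(X)$ by \eqref{def-tita}. Your ``alternative'' argument is precisely this, and your first argument is just the same computation carried out directly in degree $k$.
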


\begin{proof}
It is sufficient to prove that the assertion holds only for $\alpha\in\Lambda^1\ggo^*$, since both are derivations of  $\Lambda^k\ggo^*$. Indeed,
$$
\lca_{X_D}e^i(X)=\ddt\big{|}_0 f_t^*\, e^i(X)=\ddt\big{|}_0 e^i(df_t\, X)= \ddt\big{|}_0 e^i(e^{tD}\, X)
                =e^i\left(\ddt\big{|}_0 e^{tD}\, X\right)=e^i(D\, X),
$$
and the lemma follows.
\end{proof}

Another special class of $G_2$-structures was introduced by Bryant in \cite[Remark 13]{Bry} and we recall it next.
\begin{definition}\label{ERP-def}
On a $7$-dimensional differentiable manifold $M$, a closed $G_2$-structure $\vp$ on $M$ is said to be \emph{extremally Ricci pinched} (ERP) if,
\begin{equation}\label{ERP-eq}
\Delta\vp=d\tau=\tfrac{1}{6}|\tau|^2+\tfrac{1}{6}\ast(\tau\wedge\tau),
\end{equation}
for $\tau=-\ast d\ast\vp$, the torsion $2$-form of $\vp$.
\end{definition}
In \cite{ERP2}, it is proved that up to equivalence, there are only five left-invariant ERP-structures on simply connected Lie groups  and that they are
all expanding Ricci solitons and steady Laplacian solitons.

We consider the following invariant functional on the space of all non-flat homogeneous $G_2$-structures, where $\scalar$ denotes the scalar curvature,
\begin{equation}\label{F}
F=\frac{\scalar^2}{|\Ricci|^2}.
\end{equation}
In general, $F$ is less than or equal to $7$ by Cauchy-Schwartz, and equality holds if and only if the metric is Einstein. Thus, $F$ measures how far is
the metric from being Einstein. Bryant proved in \cite[Remark 13]{Bry} that $F$ is less than or equal to $3$ in the compact case and that evaluating at any
ERP $G_2$-structure $F$ equals to $3$. Nevertheless, this estimate does not hold in the general homogeneous case, examples of closed $G_2$-structures on
solvable Lie groups such that $F >  3 $ were found in \cite[Theorem 1.2]{LS-ERP}. In the following section, we give a new family of examples of shrinking
Laplacian solitons which also satisfy that $F$ is bigger than $3$. The functional $F$ is mostly useful to distinguish $G_2$-structures since it is
invariant up to equivalence and scaling.

\subsection{Shrinking solitons}

Nowadays, there are lots of examples of steady and expanding Laplacian solitons (see \cite{FFM,FinRff5,FinRff1,Lin,BF,LF,LS-ERP,N}), but this is not the
case for shrinking Laplacian solitons. Previous to this work, there was in the literature only a one-parameter family of examples of shrinking Laplacian
solitons given by Lauret, which we recall and analyze in the following example.

\begin{example}{\rm\cite[Example 4.10]{LS-ERP}}\label{gj}
Let $\{\sg_a:a\geq 0\}$ be the family of non isomorphic solvable Lie algebras with basis $\{e_1,\dots,e_7\}$ and Lie bracket given by:
$$
[e_1,e_3]=-e_6,\; [e_1,e_4]=-e_5,\; [e_2,e_3]=-e_5,\; [e_2,e_4]=e_6,\; [e_7,e_i]=(A_a)_{ii}e_i,
$$
where $A_a:=\tfrac{1}{4}\Diag\left(1+4\,a,1+4\,a,1-4\,a,1-4\,a,2,2\right)$. Consider the $G_2$-structure $\vp\in\Lambda^3\sg_a^*$ defined as in
\eqref{phi}, which turns out to be closed for every $a\in\RR$. The following assertions were  proved in \cite[Example 4.10]{LS-ERP},
\begin{itemize}
\item $(\sg_a,\vp)$ is a shrinking, steady or expanding Laplacian soliton if $a<\tfrac{3}{4}$, $a=\pm\tfrac{3}{4}$ or $\tfrac{3}{4}<a$, respectively. The multiple of the identity and the derivation for which \eqref{eq-sol} holds are respectively given by $c_a=-\tfrac{9}{2}+8a^2$ and
$D_a=\tfrac{1}{8}\Diag(15-8a-16a^2,15-8a-16a^2,15+8a-16a^2,15+8a-16a^2,30-32a^2,30-32a^2,0). $\\
\item $(\sg_a,\ip)$ is a Ricci soliton if and only if $a=\tfrac{3}{4}$. The multiple of the identity for which \eqref{ricsol} stands is $-3$ and
so $(\sg_{3/4},\ip)$ is an expanding Ricci soliton.\\
\item The functional $F$ defined in \eqref{F} is given by:
$$
F(a)=\frac{\scalar_a^2}{|\Ricci_a|^2}=\frac{(27+16a^2)^2}{153+352a^2+256a^4}.
$$
It satisfies that $F(0)=\frac{81}{17}\approx 4,76$ and $F(a)=3$ if and only if $(\sg_a,\vp)$ is a steady Laplacian soliton. The graphic of $F$ can be seen
in Figure \ref{graficos}.
\end{itemize}
\end{example}

The lack of examples of shrinking Laplacian solitons, which are the ones producing the only known solutions to the Laplacian flow that explode at
finite-time, motivated us to look for new examples. In order to do so, we explore the family of $G_2$-structures $\{(G_{A_1,A,B,C},\vp)\}$, for which we
have the formulas obtained in Section \ref{sec-ABC}. The searching for new examples was successful and the resulting family of examples (non equivalent to
the previous ones) is exhibited in the following lemma.

\begin{lemma}\label{gs-lemma} Consider the family of solvable simply connected Lie groups $G_s$, with corresponding Lie algebras $\ggo_s$ for $s\in\RR$, with
basis $\{e_1,\dots,e_7\}$ and Lie bracket given by
$$
[e_1,e_3]=-e_6,\;\; [e_1,e_4]=-e_5,\;\; [e_2,e_3]=-e_5, \;\; [e_7,e_i]=(A_s)_{ii}e_i,
$$
for $i=1,\dots,6$ and $A_s=\Diag{\left(\tfrac{3}{8}+s,-\tfrac{1}{8}+s,\tfrac{3}{8}-s,-\tfrac{1}{8}-s,\tfrac{1}{4},\tfrac{3}{4}\right)}$. For the
$G_2$-structure $(G_s,\vp)$, where $\vp$ is defined as in \eqref{phi} the following properties hold:
\begin{itemize}
\item[{\rm (i)}] $\vp$ is closed.\\
\item[{\rm (ii)}] $\tau_s=\tfrac{5-8s}{4}\,e^{12}+\tfrac{5+8s}{4}\,e^{34}-\tfrac{5}{2}\,e^{56}$ is the torsion $2$-form of  $\vp$.\\
\item[{\rm (iii)}] $\Delta\vp=\tfrac{64s^2-32s-5}{16}\,e^{127}+\tfrac{64s^2+32s-5}{16}\,e^{347}+\tfrac{5}{2}\,(e^{135}-e^{146}-e^{236}+e^{567})$.\\
\item[{\rm (iv)}] $(G_s,\vp)$ is equivalent to $(G_{-s},\vp)$ for any $s\in\RR$.\\
\item[{\rm (v)}] For $s,t\geq0$, $(G_s,\vp)$ is equivalent to a multiple of $(G_t,\vp)$ if and only if $s=t$.
\end{itemize}
\end{lemma}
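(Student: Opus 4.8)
The first step is to realize $(\ggo_s,\vp)$ as a member of the family of Section~\ref{sec-ABC}. In the notation of \eqref{muABC} one has $y=z=0$, $x=\tfrac38+s$, $w=-\tfrac18+s$, so $A_1=\Diag(\tfrac38+s,-\tfrac18+s)$ and $A=\ad{e_7}|_{\ggo_1}=\Diag(\tfrac38-s,-\tfrac18-s,\tfrac14,\tfrac34)$ in $\{e_3,e_4,e_5,e_6\}$, while $B=\ad{e_1}|_{\ggo_1}$, $C=\ad{e_2}|_{\ggo_1}$ are the sparse nilpotent matrices determined by $Be_3=-e_6$, $Be_4=-e_5$, $Ce_3=-e_5$ (all other basis vectors $\mapsto 0$). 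Then $\tr B=\tr C=0$, and a direct check gives $[A,B]=xB$, $[A,C]=wC$, $[B,C]=0$, so the data is consistent and $G_s=G_{A_1,A,B,C}$.

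For (i)--(iii) I would substitute this data into Section~\ref{sec-ABC}. Using \eqref{sls-tita} (equivalently Remark~\ref{titaM}) one computes the few vectors $\theta(A)\omega_i,\theta(B)\omega_i,\theta(C)\omega_i\in\Lambda^2\ggo_1^*$ occurring in Theorems~\ref{sls-teo1} and \ref{sls-teo2}: since $A$ is diagonal, $\theta(A)\omega_i$ is a short combination of the basis \eqref{base-2for}, and the sparseness of $B,C$ makes most remaining terms vanish (for instance $\theta(B)\omega_2=\theta(C)\omega_1=0$ and $\theta(B)\omega_7=-\omega_1$). Feeding this into Corollary~\ref{sls-closed} verifies the three closedness identities, giving (i); then $\tau_2=-\ast d\ast\vp$ and $\Delta\vp=d\tau_2=-d\ast d\ast\vp$, so (ii), (iii) follow by substitution into Theorem~\ref{sls-teo2}(iii),(iv), using $\ast_{\ggo_1}e^{34}=e^{56}$ and $\ast_{\ggo_1}e^{56}=e^{34}$; the bookkeeping yields exactly the stated $\tau_s$ and $\Delta\vp$. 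A partial check is that $\scalar=-\tfrac12|\tau_s|^2$ must come out consistently with (v).

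For (iv) I would exhibit the linear map $f\colon\ggo_s\to\ggo_{-s}$ given by
$$f(e_1)=-e_3,\ f(e_2)=-e_4,\ f(e_3)=e_1,\ f(e_4)=e_2,\ f(e_5)=e_5,\ f(e_6)=e_6,\ f(e_7)=e_7,$$
so that $f^*e^1=e^3$, $f^*e^2=e^4$, $f^*e^3=-e^1$, $f^*e^4=-e^2$ and $f^*$ fixes $e^5,e^6,e^7$. A direct check on the seven monomials of \eqref{phi} gives $f^*\vp=\vp$: the sign choices are precisely those making, e.g., $f^*e^{135}=e^{135}$ and $f^*(-e^{146})=-e^{236}$. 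That $f$ is a Lie algebra isomorphism is a term-by-term verification: the brackets $[e_1,e_3]=-e_6$, $[e_1,e_4]=-e_5$, $[e_2,e_3]=-e_5$ are preserved because the sign from $f(e_1)=-e_3$ (resp. $f(e_2)=-e_4$) cancels the sign from reversing a bracket, and the relations $[e_7,e_i]=(A_s)_{ii}e_i$ go over correctly because $(A_{-s})_{11}=(A_s)_{33}$, $(A_{-s})_{22}=(A_s)_{44}$, $(A_{-s})_{33}=(A_s)_{11}$, $(A_{-s})_{44}=(A_s)_{22}$, while the $e_5,e_6$ eigenvalues are $s$-independent. As $G_s,G_{-s}$ are simply connected, $f$ integrates to a Lie group isomorphism pulling $\vp$ back to $\vp$, which is the asserted equivalence.

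For (v) I would use the functional $F=\scalar^2/|\Ricci|^2$ of \eqref{F}, which is invariant under equivalence and under scaling of $\vp$. From the Ricci formula of Section~\ref{ricci-sec} one finds $\Ricci$ diagonal in $\{e_1,\dots,e_7\}$: the eigenvalues on $e_1,e_2,e_3,e_4$ form two pairs with $s$-independent sums, on $e_5,e_6$ they are $\pm\tfrac58$, and on $e_7$ it is $-\tfrac{15}{16}-4s^2$ (this last entry, which must account for $\ad{e_7}$ on all of $\hg$, is forced by (iv) to be even in $s$); hence $\scalar=-\tfrac{75}{16}-4s^2$, in agreement with $\scalar=-\tfrac12|\tau_s|^2$. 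A short computation then gives
$$F=\frac{4096\,s^4+9600\,s^2+5625}{4096\,s^4+4224\,s^2+1725},$$
and, writing $u=s^2$, the numerator of $\tfrac{dF}{du}$ is a quadratic in $u$ with all coefficients negative, so $F$ is strictly decreasing on $[0,\infty)$ and therefore injective there. Consequently, if $(G_s,\vp)$ is equivalent to a multiple of $(G_t,\vp)$ with $s,t\ge0$, then $F(s)=F(t)$, forcing $s^2=t^2$ and $s=t$; the converse is immediate. The part requiring genuine care is (v) — pinning down the $(e_7,e_7)$-entry of $\Ricci$ and then checking the sign of $dF/du$ — whereas (i)--(iv) amount to organized substitution plus one explicit isomorphism.
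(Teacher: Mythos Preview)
Your proposal is correct and follows essentially the same strategy as the paper: substitute the data into the formulas of Section~\ref{sec-ABC} for (i)--(iii), exhibit an explicit $G_2$-preserving Lie algebra isomorphism for (iv), and use the scaling-invariant functional $F=\scalar^2/|\Ricci|^2$ for (v). The only noteworthy variation is in (iv), where the paper uses the map $h_s$ swapping $e_1\leftrightarrow e_3$, $e_2\leftrightarrow e_4$ and negating $e_5,e_6$, whereas your $f$ differs from $h_s$ by the automorphism $\Diag(-1,-1,1,1,-1,-1,1)\in\Aut(\ggo_s)\cap G_2$; both choices work for the same reason, and your argument for (v) via the sign of $dF/du$ is the same monotonicity computation the paper carries out.
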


\begin{remark}
The simply connected Lie group $G_s$ coincides with the Lie group $G_{A_1,A,B,C}$ defined in Section \ref{sec-ABC} for the following matrices:
$$
A_1=\left[\begin{smallmatrix}\tfrac{3}{8}+s&\\&-\tfrac{1}{8}+s \end{smallmatrix}\right],\, A=\left[\begin{smallmatrix}
\tfrac{3}{8}-s&&&\\&-\tfrac{1}{8}-s&&\\&&\tfrac{1}{4}&\\&&&\tfrac{3}{4}
\end{smallmatrix}\right],\,
B=\left[\begin{smallmatrix}&&0&0\\&&0&0\\0&-1&&\\-1&0&&\end{smallmatrix}\right],\, C=\left[\begin{smallmatrix}&&0&0\\&&0&0\\-1&0&&\\0&0&&
\end{smallmatrix}\right].
$$
\end{remark}

\begin{proof}
It is immediate to check, using Remark \ref{titaM}, that
$$
\begin{array}{c}
\theta(A)\omega_1=\left(s-\tfrac{5}{8}\right)\omega_1, \quad\theta(A)\omega_2=\left(\tfrac{9}{8}-s\right)e^{36}+\left(\tfrac{1}{8}-s\right)e^{45},\\
 \theta(B)\omega_7=-\omega_1,\quad \theta(B)\omega_2=0, \quad \theta(C)\omega_7=e^{36}, \quad\theta(C)\omega_1=0,
\end{array}
$$
hence (i) comes directly from Corollary  \ref{sls-closed}. To prove (ii) we use the formula given in Proposition \ref{sls-torprop} (iii):
\begin{align*}
\tau=&\tfrac{1}{3}(\tr{A}-2\tr{A_1}-b_{54}-b_{63}-c_{53}) e^{12}\\
       &+\tfrac{1}{3}(\tr{A_1}-2a_{33}-2a_{44}+a_{55}+a_{66}-c_{53}-b_{63}-b_{54})e^{34}\\
       &+\tfrac{1}{3}(\tr{A_1}+a_{33}+a_{44}-2 a_{55}-2 a_{66}+2 c_{53}+2 b_{63}+2 b_{54})e^{56},
\end{align*}
replacing with the values given in the previous remark we obtain the desired equality. In order to prove  (iii) we first compute the following:
$$
\theta(A^t)\omega_7=\left(2s-\tfrac{1}{4}\right)e^{34}-e^{56},\quad \theta(B^t)\omega_1=-2\,e^{56},\quad \theta(C^t)\omega_2=-e^{56},
$$
then we name
$$
\alpha:=(\tr{A_1}+\tr{A})\omega_7+\theta(A^t)\omega_7+\theta(B^t)\omega_1+\theta(C^t)\omega_2=\left(2s+\tfrac{5}{4}\right)e^{34}-\tfrac{5}{2}e^{56},
$$
and so
$$
\theta(A)\alpha=\left(4s^2+2s-\tfrac{5}{16}\right)e^{34}-\tfrac{5}{2}e^{56},\quad \theta(B)\alpha=-\tfrac{5}{2}\left(e^{46}-e^{35}\right),\quad
\theta(C)\alpha=-\tfrac{5}{2}e^{36}.
$$
By Corollary \ref{sls-closed}, for calculating the Laplacian it is sufficient to compute the formula given in Theorem \ref{sls-teo2} (iv):
$$
\Delta\vp=-\tr{A_1}\tr{A}e^{127}+\theta(A)\alpha\wedge e^7+\theta(B)\alpha\wedge e^1+\theta(C)\alpha\wedge e^2.
$$
Replacing with the previous calculation one obtains the given assertion. Item (iv) becomes true since
$$
h_s:(\ggo_s,\vp)\rightarrow (\ggo_{-s},\vp),\quad
h_s:=\left[\begin{smallmatrix}&\id_2&&\\\id_2&&&\\&&-\id_2&\\&&&1 \end{smallmatrix}\right],
$$
defines an isomorphism between both Lie algebras $\ggo_s$ and $\ggo_{-s}$ such that $h_s\cdot\vp=\vp$, or equivalently $h_s\in G_2$. This implies that
$(G_s,\vp)\simeq (G_{-s},\vp)$, for any $s\in\RR$, since both Lie groups are unimodular completely solvable. For the proof of (v),  we calculate the
functional defined in \eqref{F}, which is invariant under equivalence and scaling:
$$
F(s):=\frac{\scalar_s^2}{|\Ricci_s|^2}=\frac{(75+64s^2)^2}{1725+4224s^2+4096s^4}.
$$
One can compute the derivative of $F$:
$$
\frac{d}{ds}F(s)=-\frac{1536\,s\,(75+64\,s^2)(125+448\,s^2)}{(1725+4224\,s^2+4096\, s^2)^2}<0, \qquad \forall s> 0,
$$
therefore $F$ is strictly decreasing and thus injective in $\RR\geq0$, and so item (v) follows.
\end{proof}

\begin{remark}
More information about the functional $F$ at $\{(G_s,\vp)\}$ is given in Corollary \ref{coro-fs}.
\end{remark}

\begin{theorem}\label{teo-fs}
Let $\{(G_s,\vp):s\geq 0\}$ be the family of left-invariant closed $G_2$-structures defined in Lemma \ref{gs-lemma}, the following assertions hold,
\begin{itemize}
\item[{\rm (i)}]  $(G_s,\vp)$ is a shrinking, steady or expanding Laplacian soliton depending on whether
$s\in\left[0,\tfrac{\sqrt{15}}{8}\right)$, $s=\tfrac{\sqrt{15}}{8}$ or $s\in\left(\tfrac{\sqrt{15}}{8},\infty\right)$, respectively; with constant
$c_s=-\tfrac{15}{8}+8s^2$ and derivation $D_s=\tfrac{1}{32}\Diag( 45-32s-64s^2,5-32s-64s^2,
45+32s-64s^2, 5+32s-64s^2, 50-128s^2, 90-128s^2,0)$.\\
\item[{\rm (ii)}] $(G_s,\ip)$ is an expanding Ricci soliton if and only if $s=\frac{5}{8}$.\\
\item[{\rm (iii)}] For any $a,s\in\RR$, the Lie algebras $\ggo_s$ and $\sg_a$ are not isomorphic, where $\sg_a$ denotes the Lie algebra given in Example \ref{gj}.
\end{itemize}
\end{theorem}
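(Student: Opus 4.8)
The plan is to treat the three items in turn, each reducing to an explicit verification with the formulas already established. For (i), I would first check that $D_s\in\Der(\ggo_s)$: since $D_s$ is diagonal in $\{e_1,\dots,e_7\}$, the derivation property amounts to the four relations $(D_s)_{11}+(D_s)_{33}=(D_s)_{66}$, $(D_s)_{11}+(D_s)_{44}=(D_s)_{55}$, $(D_s)_{22}+(D_s)_{33}=(D_s)_{55}$ and $(D_s)_{77}=0$, one for each defining bracket, and these hold for the stated diagonal. By Lemma \ref{preli-lieder}, $\lca_{X_{D_s}}\vp=-\theta(D_s)\vp$, and for a diagonal matrix $\theta(D_s)e^{ijk}=-\big((D_s)_{ii}+(D_s)_{jj}+(D_s)_{kk}\big)e^{ijk}$. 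Thus \eqref{eq-sol} becomes the scalar identity $\Delta\vp-c_s\vp=-\theta(D_s)\vp$, which I would verify monomial by monomial on the seven terms of $\vp$ using the formula for $\Delta\vp$ in Lemma \ref{gs-lemma}~(iii). The trichotomy then follows from the sign of $c_s=-\tfrac{15}{8}+8s^2$, which is negative, zero or positive according as $s<\tfrac{\sqrt{15}}{8}$, $s=\tfrac{\sqrt{15}}{8}$ or $s>\tfrac{\sqrt{15}}{8}$.

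For (ii), I would compute $\Ricci_s$ from the formula recalled in Subsection \ref{ricci-sec}, with $A=A_s$ and $A_1$ diagonal and $B,C$ the sparse matrices of the Remark following Lemma \ref{gs-lemma}. Diagonality of $A_s$ and $A_1$ yields $[A,A^t]=[A_1,A_1^t]=0$; the vanishing diagonals of $B$ and $C$ give $\tr(S_AB)=\tr(S_AC)=0$, and a short computation gives $\tr(S_BS_C)=0$ as well; the remaining pieces $[B,B^t]$, $[C,C^t]$, $(B+B^t)^2$, $(C+C^t)^2$ are diagonal. Hence $\Ricci_s$ is diagonal in $\{e_1,\dots,e_7\}$ with entries explicit quadratics in $s$. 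Now $(\ggo_s,\ip)$ satisfies \eqref{ricsol} for some $c$ iff $\Ricci_s-c\,\id$ is a derivation; being diagonal, this forces $c=\Ricci_s(e_7)=-\tfrac{15}{16}-4s^2$ on the one hand, and $c=\Ricci_s(e_1)+\Ricci_s(e_3)-\Ricci_s(e_6)=-\tfrac52$ (from any of the three remaining bracket relations) on the other. Equating these gives $s^2=\tfrac{25}{64}$, so $s=\tfrac58$ for $s\geq0$; conversely at $s=\tfrac58$ all four conditions hold and $c=-\tfrac52<0$, so the Ricci soliton is expanding.

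For (iii), I would compare nilradicals. Since $\ad e_7$ is never nilpotent, the nilradical of $\sg_a$ (resp.\ of $\ggo_s$) is the $6$-dimensional ideal $\spann\{e_1,\dots,e_6\}$, and its Lie bracket does not involve the parameter; call these $2$-step nilpotent Lie algebras $\ngo^{\sg}$ and $\ngo^{\ggo}$. Both have $2$-dimensional centre and derived subalgebra, so I would distinguish them by a finer invariant: in $\ngo^{\ggo}$ the element $e_2$ is non-central ($[e_2,e_3]=-e_5\neq0$) yet $\ad e_2$ has rank $1$, whereas in $\ngo^{\sg}$ a direct check shows every non-central element has $\ad$ of rank exactly $2$ (equivalently, the pencil of structure $2$-forms on the abelianization is anisotropic for $\sg_a$ but contains a degenerate form for $\ggo_s$). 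Hence $\ngo^{\sg}\not\cong\ngo^{\ggo}$, so $\sg_a\not\cong\ggo_s$ for all $a,s$.

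The conceptual content is modest; the main obstacle is bookkeeping — in (i), tracking signs while passing between the $\theta$-action, the Lie derivative and the seven components of $\vp$, and in (ii), evaluating the various traces and commutators in the Ricci formula without error. For (iii) the only real choice is that of a distinguishing invariant, which the rank of adjoint maps supplies with a one-line verification on each algebra.
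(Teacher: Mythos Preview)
Your argument is correct throughout. For parts (i) and (ii) it coincides with the paper's proof: the paper also verifies \eqref{eq-sol} directly with the given $c_s$ and $D_s$, reads off the sign of $c_s$, computes the diagonal $\Ricci_s$ from the formula in Section~\ref{ricci-sec}, and solves for when $\Ricci_s-\lambda\,\id$ is a derivation (obtaining $\lambda=-\tfrac52$ from $[e_1,e_3]=-e_6$ and then $s=\tfrac58$ from the $e_7$-entry).

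For part (iii) your route differs from the paper's main proof. The paper argues that an isomorphism $\sg_a\cong\ggo_s$ would force $A_a=\ad e_7|_\hg$ and $A_s=\ad e_7|_\hg$ to be conjugate up to scaling by an automorphism of $\hg$, hence conjugate on the centre $\spann\{e_5,e_6\}$; but $A_a$ has a repeated eigenvalue there while $A_s$ has two distinct ones. You instead compare the nilradicals $\hg$ directly, exhibiting a non-central element of $\ngo^\ggo$ with $\ad$ of rank $1$ and checking that no such element exists in $\ngo^\sg$. Your argument is more self-contained: that the nilradical is an isomorphism invariant is immediate, whereas the paper's step ``$A_a$ and $A_s$ must be conjugate up to scaling'' tacitly uses that the semisimple part of the outer derivation on the nilradical is determined up to scalar and $\Aut(\hg)$-conjugacy. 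The paper does note, in a remark following the proof, an alternative along your lines (distinguishing the two nilradicals via a classification table of $6$-dimensional $2$-step nilpotent Lie algebras); your explicit rank invariant accomplishes the same thing without appealing to a classification.
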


\begin{remark}
It follows from \cite[(17)]{Hom-sol} (or also from \cite[(23)]{LF}) that for each $s\geq 0$, the corresponding family of self-similar solutions to the
Laplacian flow \eqref{LF} is given by
\begin{align*}
\vp_r(t)=&\left(\left(\tfrac{16}{3}s^2-\tfrac{5}{4}\right)t+1\right)^{\tfrac{3}{2}}\left(e^{r(t)\left(\tfrac{25}{16}-2s-4s^2\right)}e^{127}+e^{r(t)\left(\tfrac{25}{16}+2s-4s^2\right)}e^{347}+e^{r(t)\left(\tfrac{35}{8}-8s^2\right)}e^{567}\right.\\
        &\left.+e^{r(t)\left(\tfrac{35}{8}-8s^2\right)}e^{135}-e^{r(t)\left(\tfrac{35}{8}-8s^2\right)}e^{146}-e^{r(t)\left(\tfrac{35}{8}-8s^2\right)}e^{236}-e^{r(t)\left(\tfrac{15}{8}-8s^2\right)}e^{245}
\right),
\end{align*}
where
$$
r(t)=\left\{\begin{array}{ll} \left(\tfrac{16}{3}s^2-\tfrac{5}{4}\right)^{-1}\log\left(\left(\tfrac{16}{3}s^2-\tfrac{5}{4}\right)t+1\right), &
s\neq\tfrac{\sqrt{15}}{8},\\
&\\
t, & s=\tfrac{\sqrt{15}}{8}. \end{array}\right.
$$
It follows immediately that the solution is defined for $t\in\left(-\infty,\left(\tfrac{5}{4}-\tfrac{16}{3}s^2\right)^{-1}\right)$ if
$s\in\left[0,\tfrac{\sqrt{15}}{8}\right)$, for $t\in(-\infty,\infty)$ if $s=\tfrac{\sqrt{15}}{8}$, and it is defined for
$t\in\left(\left(\tfrac{5}{4}-\tfrac{16}{3}s^2\right)^{-1},\infty\right)$ when $s\in\left(\tfrac{\sqrt{15}}{8},\infty\right)$.
\end{remark}

\begin{proof}
It is straightforward to check that \eqref{eq-sol} is satisfied for $c_s=-\tfrac{15}{8}+8s^2$ and $D_s=\tfrac{1}{32}\Diag( 45-32s-64s^2,5-32s-64s^2,
45+32s-64s^2, 5+32s-64s^2, 50-128s^2, 90-128s^2,0)\in\Der(\ggo_s)$. Therefore, $c<0$ for $s<\tfrac{\sqrt{15}}{8}$, $c=0$ for $s=\tfrac{\sqrt{15}}{8}$ and
$c>0$ if $s>\tfrac{\sqrt{15}}{8}$, which completes the proof of (i). To prove (ii) we first calculate the Ricci operator, using the formula given in
Section \ref{ricci-sec}:
$$
\Ricci_s=\tfrac{1}{16}\Diag\left(-25-24s,-5-24s,-25+24s,-5+24s,10,-10,-15-64s^2\right).
$$
We are interested in finding $\lambda_s\in\RR$ such that $\Ricci_s-\lambda_s \id\in\Der\ggo_s$. It follows immediately from $[e_1,e_3]=-e_6$ that
$\lambda_s=-\tfrac{5}{2}$. Also, since it must vanish at $e_7$, we obtain that $s=\tfrac{5}{8}$. In other words, $\left(\ggo_{5/8},\ip\right)$ is an
expanding Ricci soliton with
$$
\Ricci_{\tfrac{5}{8}}=-\tfrac{5}{2}\id+\tfrac{5}{8}\Diag(0,2,3,5,5,3,0),
$$
which completes the proof of (ii).

Recall that $\hg$ is the $6$-dimensional subspace with basis $\{e_1,\dots,e_6\}$. The proof of (iii) follows from the fact that $A_a$ and $A_s$ are
derivations of $\hg$ and so they must be conjugated (up to scaling) by an automorphism of $\hg$. Hence, they must be conjugated restricted to the center
which is $\spann\{e_5,e_6\}$. Clearly, this can not happen because $A_a\big{|}_{\la e_5,e_6\ra}$ has two equal eigenvalues and $A_s\big{|}_{\la
e_5,e_6\ra}$ has two different eigenvalues.  This completes the proof of the proposition.
\end{proof}

\begin{remark}
An alternative proof of $\sg_a$ and $\ggo_s$ being non isomorphic follows from the classification of $6$-dimensional $2$-step nilpotent Lie algebras
 given for example in \cite[Table 2]{Will}. The nilradical of $\sg_a$ turns out to be isomorphic to the twenty-ninth Lie algebra of the table, while the nilradical of
$\ggo_s$ is isomorphic to the twenty-eighth one.
\end{remark}

\begin{corollary}\label{coro-fs}
Recall from the proof of Lemma \ref{gs-lemma} (v) the functional $F$ at the family of $G_2$-structures $\{(G_s,\vp):s\geq 0\}$:
$$
F(s)=\frac{\scalar_s^2}{|\Ricci_s|^2}=\frac{(75+64s^2)^2}{1725+4224s^2+4096s^4};
$$
the following assertions hold,
\begin{itemize}
\item $F(s)$ reaches its maximum at   $s=0$ and  $F(0)= \frac{75}{23}\approx 3.26 > 3$.\\
\item At the steady Laplacian soliton $(G_{\sqrt{15}/8},\vp)$, we have that $F(s)=\frac{135}{49}\approx 2.75$.
\item At the expanding Ricci soliton $(G_{5/8},\ip)$, we obtain $F(s)=2.5$.
\end{itemize}
\end{corollary}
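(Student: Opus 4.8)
The plan is to reduce the whole corollary to elementary arithmetic, since every substantive ingredient has already appeared in the excerpt. First I would observe that the closed form of $F$ follows from the Ricci operator computed in the proof of Theorem~\ref{teo-fs}(ii): starting from
$$
\Ricci_s=\tfrac{1}{16}\Diag\left(-25-24s,\,-5-24s,\,-25+24s,\,-5+24s,\,10,\,-10,\,-15-64s^2\right),
$$
one gets $\scalar_s=\tr\Ricci_s=-\tfrac{1}{16}(75+64s^2)$ (the $\pm 24s$ terms cancel in pairs) and $|\Ricci_s|^2=\tr(\Ricci_s^2)=\tfrac{1}{256}(1725+4224s^2+4096s^4)$, obtained by squaring the seven diagonal entries and collecting the constant, $s^2$ and $s^4$ contributions. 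Dividing yields $F(s)=(75+64s^2)^2/(1725+4224s^2+4096s^4)$, which is the formula quoted in the statement of Lemma~\ref{gs-lemma}(v).

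For the first bullet I would reuse the derivative already computed in the proof of Lemma~\ref{gs-lemma}(v),
$$
\tfrac{d}{ds}F(s)=-\frac{1536\,s\,(75+64s^2)(125+448s^2)}{(1725+4224s^2+4096s^4)^2},
$$
which is strictly negative for $s>0$, all three factors in the numerator being positive there. Hence $F$ is strictly decreasing on $[0,\infty)$, so its maximum is attained at the endpoint $s=0$, where $F(0)=75^2/1725=5625/1725=75/23$; and $75/23>69/23=3$, which gives the strict inequality $F(0)>3$.

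For the remaining two bullets I would invoke the identifications from Theorem~\ref{teo-fs}: the steady Laplacian soliton is $(G_{\sqrt{15}/8},\vp)$, i.e.\ $64s^2=15$, so that $F=\tfrac{(75+15)^2}{1725+990+225}=\tfrac{8100}{2940}=\tfrac{135}{49}$; and the expanding Ricci soliton is $(G_{5/8},\ip)$, i.e.\ $64s^2=25$, so that $F=\tfrac{(75+25)^2}{1725+1650+625}=\tfrac{10000}{4000}=\tfrac{5}{2}$.

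I do not expect any genuine obstacle here: the only two non-formal inputs — the explicit form of $\Ricci_s$ and the sign of $F'(s)$ — are established earlier, and everything else is bookkeeping. The one spot that deserves care is the expansion of $\tr(\Ricci_s^2)$, where the $s^2$ coefficient is the sum $4\cdot 24^2+1920=2304+1920=4224$ of the contributions from the four entries linear in $s$ and from the last diagonal entry, and one should confirm that this, together with the linear terms cancelling, assembles the denominator exactly as $1725+4224s^2+4096s^4$.
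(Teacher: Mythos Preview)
Your proof is correct and matches the paper's treatment: the corollary is stated without proof in the paper, being an immediate consequence of the explicit formula for $F(s)$ and the monotonicity established in Lemma~\ref{gs-lemma}(v), and your write-up simply supplies the routine arithmetic verifications.
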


\begin{figure}\centering
\begin{tikzpicture}
  \draw[->] (-1,0) -- (10,0) ;
  \draw[->] (0,-1) -- (0,5) ;
  \draw[color=blue,domain=0:3] plot (\x,{(0.04545*\x*\x*\x*\x+2.454545*\x*\x+33.13636)/(0.04545*\x*\x*\x*\x+\x*\x+6.954545)});
  \draw[color=green,domain=3:10] plot (\x,{(0.04545*\x*\x*\x*\x+2.454545*\x*\x+33.13636)/(0.04545*\x*\x*\x*\x+\x*\x+6.954545)});
  \draw (3,-0.05) -- (3,0.05) node[below] {{\tiny $\frac{3}{4}$}};
  \draw[color=red,fill] (3,3) circle (1pt);
  \draw (-0.05,4.7647) -- (0.05,4.7647) node[right] {{\tiny $4,76$}};
  \draw[dashed] (0,1) -- (0,1);
  \draw[dashed] (0.3,1) -- (5,1);
  \draw (-0.05,1) -- (0.02,1) node[right] {{\tiny $1$}};
  \draw[color=blue,domain=0:1.93649] plot (\x,{(0.33333*\x*\x*\x*\x+12.5*\x*\x+117.1875)/(0.33333*\x*\x*\x*\x+5.5*\x*\x+35.9375)});
  \draw[color=green,domain=1.93649:10] plot (\x,{(0.33333*\x*\x*\x*\x+12.5*\x*\x+117.1875)/(0.33333*\x*\x*\x*\x+5.5*\x*\x+35.9375)}) ;
  \draw (1.93649,-0.05) -- (1.93649,0.05) node[below] {{\tiny $\frac{\sqrt{15}}{8}$}};
  \draw[color=red, fill] (1.93649,2.75551) circle (1pt);
  \draw (-0.05,3) -- (0.05,3) node[right] {{\tiny $3$}};
  \draw (-0.05,2.75551) -- (0.05,2.75551) node[right] {{\tiny $2,75$}};
  \draw (-0.07,3.260869) -- (0.07,3.260869) node[ right] {{\tiny $3,26$}};
  \draw[dashed] (0.3,1) -- (10,1);
  \draw (-0.05,1) -- (0.02,1) node[right] {{\tiny $1$}};
  \draw (-0.05,2.5) -- (0.05,2.5) node[right] {{\tiny $2,5$}};
  \draw (2.5,-0.05) -- (2.5,0.05) node[below] {{\tiny $\frac{5}{8}$}};
  \draw[fill] (2.5,2.5) circle (0.5pt);
  \draw (10,1) node[right]{$F(s)$};
  \draw (10,1.5) node[right]{$F(a)$};
  \end{tikzpicture}
  \caption{Graph of the functional $F$ at $(\sg_a,\vp)$ and $(\ggo_s,\vp)$.}\label{graficos}
  \end{figure}
Figure \ref{graficos} shows the graphics of $F$ defined in \eqref{F} at the families of $G_2$-structures $\{(\sg_a,\vp):a>0\}$ (see Example \ref{gj}) and
$\{(\ggo_s,\vp):s>0\}$ (see Lemma \ref{gs-lemma}). The green color in both graphics stands for the values of $F$ where the corresponding $G_2$-structure is
an expanding Laplacian soliton. On the contrary, the blue color indicates the points for which $(\sg_a,\vp)$ and $(\ggo_s,\vp)$ are shrinking Laplacian
solitons and the red points denote the steady Laplacian solitons.

\subsection{Steady solitons}
On the other hand, previous to this work there was only one known example, which we recall below, given by Fino and Raffero in \cite{FinRff3}, of a closed
$G_2$-structure which is a steady Laplacian soliton but it is not an ERP-structure, in other words, it does not satisfy \eqref{ERP-eq}.
\begin{example}\label{fr}{\rm\cite[Section 4]{FinRff3}} Let $\ggo_{FR}$ denote the solvable Lie algebra with basis $\{e_1,\dots,e_7\}$ and Lie bracket given
by,
$$
[e_1,e_4]=-2e_5,\;[e_2,e_4]=2e_6,\;[e_7,e_i]=(A_{FR})_{ii}e_i,
$$
where $A_{FR}:=\Diag\left(0,0,1,-1,-1,-1,0\right)$. Fino and Raffero proved that the $3$-form $\vp$ given in \eqref{phi} turns out to be closed and the
$G_2$-structure $(\ggo_{FR},\vp)$ is a steady Laplacian soliton (i.e.\ $ d\tau=\Delta\vp=\lca_{X_D}\vp$), for $D=\Diag(0,0,-4,4,4,4,0)\in\Der(\ggo_{FR})$.
Moreover, they proved that $\vp$ does not satisfy the ERP condition, in fact
$$
-8(e^{146}+e^{245}-e^{567})=d\tau\ne\tfrac{1}{6}|\tau|^2\vp+\tfrac{1}{6}\ast(\tau\wedge\tau)=4\vp+\tfrac{4}{3}(e^{567}-2e^{127}-2e^{347}).
$$
\begin{remark}
This example proves that the converse of  \cite[Corollary 1.2]{ERP1} does not hold, indeed it proves that not every left-invariant steady Laplacian soliton
is ERP.
\end{remark}
\end{example}

Recall from Theorem \ref{teo-fs} the steady Laplacian soliton $(G_{\sqrt{15}/8},\vp)$. A trivial verification shows that \eqref{ERP-eq} does not hold,
thus, $(G_{\sqrt{15}/8},\vp)$ is a steady Laplacian soliton which is not ERP. Indeed,
\begin{align*}
d\tau=&\tfrac{5-2\sqrt{15}}{8} e^{127}+\tfrac{5+2\sqrt{15}}{8} e^{347}+\tfrac{5}{2}(e^{135}-e^{146}-e^{236}+e^{567}),\\
\tfrac{1}{6}(|\tau|^2\vp+\ast(\tau\wedge\tau))=&\tfrac{20-5\sqrt{15}}{24}e^{127}+\tfrac{20+5\sqrt{15}}{24}e^{127}+\tfrac{15}{8}(e^{135}-e^{146}-e^{236}-e^{245})+\tfrac{25}{12}e^{567}.
\end{align*}
In other words, $(G_{\sqrt{15}/8},\vp)$ provides the second example that proves that the converse of \cite[Corollary 1.2]{ERP1} does not hold.

\begin{remark} The $G_2$-structures $(G_{\sqrt{15}/8},\vp)$ from Theorem \ref{teo-fs} and $(G_{FR},\vp)$ from Example \ref{fr} are not equivalent, since the Lie algebras $\ggo_{\sqrt{15}/8}$  and $\ggo_{FR}$ are not isomorphic.
\end{remark}
Indeed, the derivations of $\hg$, $A_{\sqrt{15}/8}$ and $A_{FR}$, should be conjugated (up to scaling) by an automorphism of $\hg$, but $A_{FR}|_\hg$ has
three eigenvalues each one of multiplicity two, meanwhile $A_{\sqrt{15}/8}|_\hg$ has six different eigenvalues. The non equivalence follows from the fact
that both Lie groups are unimodular and completely solvable.

\end{document}